\theoremstyle{thmstyleone}%
\newtheorem{theorem}{Theorem}
\newtheorem{proposition}[theorem]{Proposition}%
\newtheorem{corollary}[theorem]{Corollary}
\newtheorem{lemma}[theorem]{Lemma}
\theoremstyle{thmstyletwo}%
\newtheorem{example}{Example}%
\newtheorem{remark}{Remark}%
\theoremstyle{thmstylethree}%
\newtheorem{definition}{Definition}%
\begin{document}

\title[A new generalized inverse in Banach algebras]{A new extension of generalized Drazin inverse in Banach algebras}


\author[1]{\fnm{Yanxun} \sur{Ren}}\email{renyanxun110@126.com}

\author*[2]{\fnm{Lining} \sur{Jiang}}\email{jianglining@bit.edu.cn}
\equalcont{These authors contributed equally to this work.}


\affil*[1]{\orgdiv{School of Mathematics and Statistics}, \orgname{Beijing Institute of Technology}, \orgaddress{\city{Beijing}, \postcode{100081},  \country{China}}}

\affil[2]{\orgdiv{School of Mathematics and Statistics}, \orgname{Beijing Institute of Technology}, \orgaddress{\city{Beijing}, \postcode{100081},  \country{China}}}



\abstract{In this paper, we introduce and study a new generalized inverse, called ag-Drazin inverses in a Banach algebra $\mathcal{A}$ with unit $1$. An element $a\in\mathcal{A}$ is ag-Drazin invertible if there exists $x\in\mathcal{A}$ such that $ax=xa, \, xax=x \ {\rm and} \ a-axa\in\mathcal{A}^{acc}$, where $\mathcal{A}^{acc}\triangleq\{a\in\mathcal{A}: a-\lambda 1 \ {\rm is \ generalized \ Drazin\ invertible}  \ {\rm for \ all} \ \lambda\in\mathbb{C}\backslash\{0\}\}.$ Using idempotent elements, we characterize this inverse and give some its representations. Also, we prove that $a\in\mathcal{A}$ is ag-Drazin invertible if and only if $0$ is not an accumulation point of $\sigma_{d}(a)$, where $\sigma_{d}(a)$ is the generalized Drazin spectrum of $a$. }

\keywords{Generalized Drazin inverse, Spectrum, Idempotent, Banach algebra}


\pacs[MSC Classification]{46H05, 46H99, 15A09}

\maketitle

\section{Introduction}\label{sec1}

Throughout this paper, $\mathcal{A}$ stands for a Banach algebra over complex filed $\mathbb{C}$ with idenity $1$. We use ${\rm iso\,}M$ and ${\rm acc\,}M$ to denote the set of all isolated points and  accumulation points of $M\subseteq\mathbb{C}$, respectively. For $a\in\mathcal{A}$, by $\{a\}^{'}$ we denote $\{b\in R: ab=ba\}$ and accordingly, by $\{a\}^{''}$ we denote $\{\{a\}^{'}\}^{'}$. Let $\sigma(a)$ be the spectrum of $a$. Recall that $a$ is a quasinilpotent element if $a\in\mathcal{A}^{qnil}$, where $\mathcal{A}^{qnil}=\{a\in\mathcal{A}: \sigma(a)=\{0\}\}$.  Given $\epsilon>0$ and $\lambda\in\mathbb{C}$, $B(\lambda, \epsilon)=\{\mu\in\mathbb{C}: \mid\mu-\lambda\mid<\epsilon\}$, and $B(\lambda, \epsilon)^{\circ}=\{\mu\in\mathbb{C}: 0<\mid\mu-\lambda\mid<\epsilon\}$.

Recall that an element $a\in R$ is Drazin invertible \cite{MPD} if there exists an element $x\in\{a\}^{'}$ such that
\begin{equation*}
xax=x \ {\rm and} \ a^{k}xa=a^{k} \ {\rm for \ some \ integer} \ k\geq0.
\end{equation*}
The preceding $x$ is unique if it exists, and denote it by $a^{D}$.  The Drazin index of $a$, denoted by $\hbox{ind}(a)$, is the smallest non-negative integer $k$ satisfying $a^{k}xa=a^{k}$. In particular, an element $a$ is called group invertible if ${\rm ind}(a)=1$, and the group inverse of $a$ is denoted by $a^{\sharp}$. By $\mathcal{A}^{\sharp}$ we denote the set of all group invertible elements in $\mathcal{A}$.
The concept of generalized Drazin inverse in Banach algebras was introduced by Koliha \cite{Kol1996}. An element $a\in\mathcal{A}$ is generalized Drazin invertible if there exists $x\in\{a\}^{'}$ such that 
\begin{equation*}
 xax=x \ {\rm and} \ a-a^{2}x\in\mathcal{A}^{qnil}.
\end{equation*}
If such $x$ exists, then it is unique, and denote it by $a^{d}$. We use $\mathcal{A}^{d}$ to denote the set of all generalized Drazin invertible elements of $\mathcal{A}$. The generalized Drazin spectrum of $a$ is defined by $\sigma_{d}(a)=\{\lambda\in\mathbb{C}: a-\lambda 1 \notin\mathcal{A}^{d}\}$. Accordingly, $\rho_{d}(a)=\mathbb{C}\backslash\sigma_{d}(a)$.  In particularly, $\sigma_{d}(a)$ may be a empty set. For example, when $a$ is a quasinilpotent element, one has ${\rm acc\,}\sigma(a)=\emptyset$, and hence $\sigma_{d}(a)=\emptyset$. It follows from \cite[Theorem 4.2]{Kol1996} that $0\notin\sigma_{d}(a)$ if and only if $0\notin{\rm acc\,}\sigma(a)$. More interesting properties of the g-Drazin inverse
can be found in a recent papers \cite{CVE2011,Mos2017,Mos20173}.

In \cite{Mos2020}, Mosi\'c introduced a new generalized inverse, which is an extension of generalized Drazin inverse in a Banach algebra. We call $a\in\mathcal{A}$ extended generalized Drazin invertible (in short, eg-Drazin invertible) if there exists $x\in\{a\}^{'}$ such that 
\begin{equation*}
 xax=x \ {\rm and} \ a-a^{2}x\in\mathcal{A}^{d}.
\end{equation*}
Such $x$ is called a eg-Drazin inverse of $a$, and denote it by $a^{ed}$. $\mathcal{A}^{ed}$ denotes the set of all eg-Drazin invertible elements of $\mathcal{A}$. It was proved in \cite[Theorem 2]{Mos2020} that $a$ is eg-Drazin invertible if and only if $a$ is generalized Drazin invertible if and only if $0\notin{\rm acc\,}\sigma(a)$.

Let $L(\mathcal{X})$ denote the algebra of all bounded linear operators on a complex Banach space $\mathcal{X}$. For $T\in L(\mathcal{X})$, by $n(T)$ and $d(T)$ we denote the dimension of null space $N(T)$  and the codimension of range space $R(T)$, respectively. We call $T$ a Fredholm operator if both $n(T)$ and $d(T)$ are finite, and $T$ a Browder operator if $T$ is a Fredholm operator and $0\in{\rm iso\,}\sigma(T)$. The Fredholm spectrum $\sigma_{e}(T)$ and the Browder spectrum $\sigma_{b}(T)$ are the set of all $\lambda\in\mathbb{C}$ such that $T-\lambda I$ is not a Fredholm operator and Browder operator, respectively. Recall that $T$ is a Riesz operator if $\sigma_{e}(T)=\{0\}$, i.e. $T-\lambda I$ is Browder for $\lambda\in\mathbb{C}\backslash\{0\}$ (see \cite[Theorem 3.2]{Aie2018}). Obviously, if $T$ is Riesz, then $\sigma_{d}(T)\subseteq\{0\}$.

In \cite{Ziv2017}, \v Zivkovi\'c-Zlatanovi\'c introduced a concept of generalized Drazin-Riesz invertible operators in $L(\mathcal{X})$.
An operator $T\in L(\mathcal{X})$ is generalized Drazin-Riesz invertible if there exists $S\in\{T\}^{'}$ such that 
\begin{equation*}
STS=S \ {\rm and} \ T-T^{2}S \ {\rm is \ a \ Riesz \ operator}.
\end{equation*}
 In \cite{Aba2021}, Abad and Zguitti further investigated generalized Drazin-Riesz invertible operators, and proved that $T$ is a generalized Drazin-Riesz invertible operators if and only if $0\notin{\rm acc\,}\sigma_{b}(T)$ in Theorem 2.9 of \cite{Aba2021}.

Let $\mathcal{A}^{acc}\triangleq\{a\in\mathcal{A}: a-\lambda 1\in\mathcal{A}^{d} \ {\rm for \ all} \ \lambda\in\mathbb{C}\backslash\{0\}\}.$
Motivated by generalization of the Drazin inverse to the generalized Drazin inverse, to further
extend the notion of generalized Drazin inverse, we use $\mathcal{A}^{acc}$ rather that $\mathcal{A}^{qnil}$ in the definition of generalized Drazin inverse. 
\begin{definition}
We say that $a\in\mathcal{A}$ is $\mathcal{A}^{acc}$-generalized Drazin invertible (in short, ag-Drazin invertible) if there exists $x\in\{a\}^{'}$ such that 
\begin{equation*}
 xax=x \ {\rm and} \ a-a^{2}x\in \mathcal{A}^{acc}.
\end{equation*}
\end{definition}
Such $x$ is called a ag-Drazin inverse of $a$, and denote it by $a^{ad}$. We use $\mathcal{A}^{ad}$ to denote the set of all ag-Drazin invertible elements of $\mathcal{A}$. Obviously, $\mathcal{A}^{acc}\subseteq\mathcal{A}^{ad}$, and $\mathcal{A}^{d}=\mathcal{A}^{ed}\subseteq\mathcal{A}^{ad}$ since $\mathcal{A}^{qnil}\subset\mathcal{A}^{acc}$. Moreover, if $T\in\mathcal{A}$ is generalized Drazin-Riesz invertible then $T$ is ag-Drazin invertible when $\mathcal{A}= L(\mathcal{X})$. The following example indicates that if $a\in\mathcal{A}^{ad}$, then ag-Drazin inverses of $a$ may be not unique.
\begin{example}
Let $\mathcal{A}=L(\ell^{2}(\mathbb{N}))$ be the algebra of all bounded linear operators on $\ell^{2}(\mathbb{N})$. Given an operator $T: \ell^{2}\rightarrow\ell^{2}$ defined by 
$$T(x_{1}, x_{2}, x_{3}, \cdots)=(x_{1}, \frac{x_{2}}{2}, \frac{x_{3}}{3}, \cdots).$$ Set $$T_{1}(x_{1}, x_{2}, x_{3}, \cdots)=(x_{1},x_{2},0, 0, \cdots)$$ and $$T_{2}(x_{1}, x_{2}, x_{3}, \cdots)=(x_{1}, x_{2}, x_{3}, 0, \cdots).$$ Then one has 
$$TT_{1}(x_{1}, x_{2}, \cdots)=T(x_{1}, 2x_{2}, 0, \cdots)=(x_{1}, x_{2}, 0, \cdots),$$
$$T_{1}T(x_{1}, x_{2}, \cdots)=T_{1}(x_{1}, \frac{x_{2}}{2}, \frac{x_{3}}{3}, \cdots)=(x_{1}, x_{2}, 0, \cdots).$$ Thus, $TT_{1}=T_{1}T$. Moreover, since $$T_{1}TT_{1}(x_{1}, x_{2}, \cdots)=T_{1}(x_{1}, x_{2}, 0, \cdots)=(x_{1},x_{2},0, 0, \cdots)$$ and $$(T-T^{2}T_{1})(x_{1}, x_{2}, \cdots)=(0, 0, \frac{x_{3}}{3}, \frac{x_{4}}{4}, \cdots),$$ it follows that $T_{1}TT_{1}=T_{1}$ and ${\rm acc\,}\sigma(T-T^{2}T_{1})=\{0\}$. Similarly, one can  see that $TT_{2}=T_{2}T$, $T_{2}TT_{2}$ and ${\rm acc\,}\sigma(T-T^{2}T_{2})=\{0\}$. So both $T_{1}$ and $T_{2}$ are ag-Drazin inverse of $T$, but $T_{1}\neq T_{2}$.
\end{example}

In this paper, we investigate some basic properties of ag-Drazin inverses in Banach algebra $\mathcal{A}$. Using idempotent elements, we give some characterizations of ag-Drazin inverses, and prove that $a$ is ag-Drazin invertible if and only if there exists an idempotent $p\in\{a\}^{'}$ such that $a+p\in\mathcal{A}^{d}$ and $ap\in\mathcal{A}^{acc}$ if and only if  there exists an idempotent $f\in\{a\}^{'}$ such that $af$ is invertible in $f\mathcal{A}f$ and $a(1-f)\in((1-f)\mathcal{A}(1-f))^{acc}$. Moreover, we study  the ag-Drazin inverses by means of generalized Drazin spectrum, and show that $a\in\mathcal{A}$ is ag-Drazin invertible if and only if $0\notin{\rm acc\,}\sigma_{d}(a)$.

\section{The ag-Drazin inverses}\label{sec2}
In this section, we investigate the basic properties of ag-Drazin inverses in Banach algebra $\mathcal{A}$.  Harte defined the concept of a quasi-polar element in Banach algebra $\mathcal{A}$ (see \cite[Definition 7.5.2]{Har1988}). An element $a\in\mathcal{A}$ is said to be quasi-polar if there exists an idempotent $p\in\{a\}^{'}$ such that $a(1-p)\in\mathcal{A}^{qnil}$ and $p\in a\mathcal{A}\cap\mathcal{A}a$.  Koliha in his celebrated paper \cite{Kol1996} proved that $a$ is quasi-polar if and only if $a$ is generalized Drazin invertible.  In the following, we generalize this concept by means of new set $\mathcal{A}^{acc}$. 

\begin{definition}
An element $a\in\mathcal{A}$ is said to be acc-quasi-polar if there exists an idempotent $q\in\{a\}^{'}$ such that 
$$q\in a\mathcal{A}\cap\mathcal{A}a \ {\rm and} \ a(1-q)\in\mathcal{A}^{acc}.$$
\end{definition}

The following theorem proves that an element $a$ is acc-quasi-polar if and only if $a$ is ag-Drazin invertible.  Moreover, using idempotent elements, we also give some characterizations of ag-Drazin inverses. Let $a\in R$, and $p\in R$ be an idempotent element. By $\overline{p}$  we denote $1-p$. Then we can write
\begin{displaymath}
a=pap+pa\overline{p}+\overline{p}ap+\overline{p}a\overline{p}.
\end{displaymath}
Every idempotent element $p\in\mathcal{A}$ induces a representation of an element $a\in R$ given by the following matrix:
\begin{equation*}
 a=\left(\begin{array}{cccc}
 a_{1} & a_{2}\\
 a_{3} & a_{4}
 \end{array}\right)_{p},
\end{equation*}
where $a_{1}=pap, \, a_{2}=pa\overline{p}, \, a_{3}=\overline{p}ap$ and $a_{4}=\overline{p}a\overline{p}$.
We use $\sigma(a_{1}, p\mathcal{A}p)$ and $\sigma(a_{4}, \overline{p}\mathcal{A}\overline{p})$ to denote the specrum of $a_{1}$ in $p\mathcal{A}p$ and the specrum of $a_{2}$ in $\overline{p}\mathcal{A}\overline{p}$, respectively.
\begin{theorem} \label{AG}
Let $a\in\mathcal{A}$. The the following statements are equivalent.
\begin{enumerate}
\item[(1)] $a\in\mathcal{A}^{ad}$.
\item[(2)] $a$ is acc-quasi-polar.
\item[(3)] There exists an idempotent element $p\in\{a\}^{'}$ such that $a+p\in\mathcal{A}^{d}$ and $ap\in\mathcal{A}^{acc}$.
\item[(4)] There exists an idempotent element $f\in\{a\}^{'}$ such that   
\begin{equation*}
  a=\left(\begin{array}{cccc}
 a_{1} & 0\\
 0 & a_{2}
 \end{array}\right)_{f},
\end{equation*}
where $a_{1}$ is invertible in $f\mathcal{A}f$ and $a_{2}\in(\overline{f}\mathcal{A}\overline{f})^{acc}$.
\item[(5)] There exists an idempotent element $f\in\{a\}^{'}$ such that $a(1-f)\in\mathcal{A}^{acc}$ and $af+1-f$ is invertible in $\mathcal{A}$.
\end{enumerate}
\end{theorem}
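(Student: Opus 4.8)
The plan is to prove the cyclic chain $(1)\Rightarrow(2)\Rightarrow(3)\Rightarrow(4)\Rightarrow(5)\Rightarrow(1)$. The tool used repeatedly is the spectral splitting along a commuting idempotent: if $e\in\{b\}'$ is idempotent, then $\sigma(b)=\sigma(ebe,e\mathcal{A}e)\cup\sigma(\bar eb\bar e,\bar e\mathcal{A}\bar e)$, so by Koliha's criterion \cite{Kol1996} ($0\notin\sigma_{d}(b)$ iff $0\notin{\rm acc\,}\sigma(b)$, recalled in Section~\ref{sec1}) one has $b\in\mathcal{A}^{d}$ iff $ebe\in(e\mathcal{A}e)^{d}$ and $\bar eb\bar e\in(\bar e\mathcal{A}\bar e)^{d}$; applying this to the translates $b-\lambda 1$ shows, for $a\in\{e\}'$, that $a\in\mathcal{A}^{acc}$ iff $eae\in(e\mathcal{A}e)^{acc}$ and $\bar ea\bar e\in(\bar e\mathcal{A}\bar e)^{acc}$, and more generally that both invertibility and membership of $\mathcal{A}^{acc}$ pass to any subidempotent of $e$ that commutes with $a$. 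I would isolate these observations, together with the elementary fact that an invertible element perturbed by a commuting quasinilpotent is again invertible, as preliminary lemmas.

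Three of the arrows are short. For $(1)\Rightarrow(2)$: if $x$ is an ag-Drazin inverse of $a$, put $q=ax$; then $q^{2}=a(xax)=ax=q$, $aq=a^{2}x=qa$, $q=ax\in a\mathcal{A}$, $q=xa\in\mathcal{A}a$, and $a(1-q)=a-a^{2}x\in\mathcal{A}^{acc}$, so $a$ is acc-quasi-polar. For $(4)\Rightarrow(5)$: the off-diagonal corners of $a$ relative to $f$ vanish because $f\in\{a\}'$, hence $af=a_{1}$ is invertible in $f\mathcal{A}f$ and $a(1-f)=a_{2}$; then $af+1-f=a_{1}\oplus(1-f)$ is invertible in $\mathcal{A}$ by the splitting remark, and $a(1-f)=0\oplus a_{2}\in\mathcal{A}^{acc}$ since its $f\mathcal{A}f$-component is $0$ and its $\bar f\mathcal{A}\bar f$-component is $a_{2}\in(\bar f\mathcal{A}\bar f)^{acc}$. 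For $(5)\Rightarrow(1)$: with $u=af+1-f$ invertible and commuting with $a$ we have $uf=af$, so setting $x=u^{-1}f$ gives $ax=xa=u^{-1}af=f$, whence $xax=xf=x$ and $a-a^{2}x=a-af=a(1-f)\in\mathcal{A}^{acc}$. For $(2)\Rightarrow(3)$: take $p=1-q$; writing $q=au=va$ (from $q\in a\mathcal{A}\cap\mathcal{A}a$) and multiplying on both sides by $q$ produces elements $quq,\,qvq\in q\mathcal{A}q$ that are respectively a right and a left inverse of $aq$ in $q\mathcal{A}q$, so $aq$ is invertible there; since $q\in\{a\}'$, $a=aq\oplus a(1-q)$ is diagonal relative to $q$, $ap=a(1-q)\in\mathcal{A}^{acc}$, and $a+p=aq\oplus\bigl(a(1-q)+(1-q)\bigr)$ has an invertible first corner and (taking $\lambda=-1$ in the $\mathcal{A}^{acc}$-condition for $a(1-q)$ inside $(1-q)\mathcal{A}(1-q)$) a generalized Drazin invertible second corner, so $a+p\in\mathcal{A}^{d}$.

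The substantial step is $(3)\Rightarrow(4)$, where the given $p$ must be replaced by a tailored idempotent built from $b:=a+p\in\mathcal{A}^{d}$. Let $g=1-bb^{d}$ be the spectral idempotent of $b$ at $0$, so that $g\in\{b\}''\subseteq\{a\}'\cap\{p\}'$, $bg$ is quasinilpotent, and $b(1-g)$ is invertible in $(1-g)\mathcal{A}(1-g)$. Set $f:=(1-g)(1-p)$; it is idempotent, commutes with $a$, and $1-f=g+(1-g)p$ with $g$ and $(1-g)p$ orthogonal. Since $a(1-p)=b(1-p)$, we get $af=b(1-g)(1-p)$, the restriction of the invertible element $b(1-g)$ of $(1-g)\mathcal{A}(1-g)$ to the commuting subidempotent $(1-g)(1-p)$, hence $af$ is invertible in $f\mathcal{A}f$. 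Next $ag=bg-pg$ in $g\mathcal{A}g$ with $bg$ quasinilpotent commuting with the idempotent $pg$, and for $\lambda\notin\{0,-1\}$ the element $ag-\lambda g=bg-(pg+\lambda g)$ is an invertible element perturbed by a commuting quasinilpotent, hence invertible; so $\sigma(ag,g\mathcal{A}g)\subseteq\{0,-1\}$ is finite and $ag\in(g\mathcal{A}g)^{acc}$. Finally $a(1-g)p=(ap)\,(1-g)p$ is the restriction of $ap\in(p\mathcal{A}p)^{acc}$ (obtained from $ap\in\mathcal{A}^{acc}$ via the splitting remark) to the commuting subidempotent $(1-g)p$ of $p$, so $a(1-g)p\in\bigl((1-g)p\,\mathcal{A}\,(1-g)p\bigr)^{acc}$. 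Splitting $a(1-f)=ag\oplus a(1-g)p$ now gives $a(1-f)\in\bigl((1-f)\mathcal{A}(1-f)\bigr)^{acc}$, and since $f\in\{a\}'$ the matrix of $a$ relative to $f$ is diagonal, which is exactly $(4)$.

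I expect $(3)\Rightarrow(4)$ to be the real obstacle: the repair $f=(1-g)(1-p)$ is not the obvious candidate, and one must check that the three resulting pieces — an invertible $af$ on $f\mathcal{A}f$, a finite-spectrum $ag$ on $g\mathcal{A}g$, and an $\mathcal{A}^{acc}$-type element $a(1-g)p$ on the third corner — assemble into one diagonal decomposition. It all rests on the preliminary facts above, which are standard but should be stated cleanly beforehand so the argument reads smoothly.
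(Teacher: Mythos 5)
Your proof is correct and follows essentially the paper's own route: the same cycle $(1)\Rightarrow(2)\Rightarrow(3)\Rightarrow(4)\Rightarrow(5)\Rightarrow(1)$, with the same pivotal construction in $(3)\Rightarrow(4)$, namely $f=(1-g)(1-p)$ where $g$ is the spectral idempotent of $a+p$ at $0$. The only real differences are in how two verifications are organized. In $(2)\Rightarrow(3)$ the paper factors $a+p=c(1+ap)$ with $c$ invertible and invokes Koliha's product theorem, whereas you split $a+p$ along $q$ into the invertible corner $aq$ and the corner $a(1-q)+(1-q)$, handled by taking $\lambda=-1$ in the $\mathcal{A}^{acc}$-condition; both work, and your corner argument avoids the paper's somewhat awkward computation with $uav+p$. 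In $(3)\Rightarrow(4)$ the paper proves $a\overline{f}\in(\overline{f}\mathcal{A}\overline{f})^{acc}$ by explicitly constructing the inverse of $\mu\overline{f}-a\overline{f}$ from the resolvents of $ah(1-p)$ and $ap$, whereas you decompose $\overline{f}=g\oplus(1-g)p$ orthogonally and observe that $\sigma(ag,g\mathcal{A}g)\subseteq\{0,-1\}$ is finite while $a(1-g)p$ is a commuting-subidempotent restriction of $ap$. Both versions ultimately rest on the same splitting of the spectrum along a commuting idempotent; yours trades the explicit element computations for repeated appeals to that splitting lemma, which shortens the argument but puts correspondingly more weight on the preliminary reductions, so those should indeed be stated and proved cleanly up front as you propose.
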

\begin{proof}
$(1)\Rightarrow (2)$ Assume that $x$ is a ag-Drazin inverse of $a$. Let $e=xa$. Then $e^{2}=xaxa=xa=e$, $a(1-e)=a-a^{2}x\in\mathcal{A}^{acc}$ and $ea=xaa=axa=ae$. From $e=xa=ax$ one has $e\in a\mathcal{A}\cap\mathcal{A}a$. Thus, $a$ is acc-quasi-polar.

$(2)\Rightarrow(3)$ Suppose that $a$ is acc-quasi-polar. Then there exists $e^{2}=e\in\mathcal{A}$ such that $ea=ae$, $a(1-e)\in\mathcal{A}^{acc}$ and $e\in a\mathcal{A}\cap\mathcal{A}a$. Set $p=1-e$. Then $ap=pa$ and $ap=a(1-e)\in\mathcal{A}^{acc}$. From $e\in a\mathcal{A}\cap\mathcal{A}a$ that there exist $u,\ v\in\mathcal{A}$ such that $1-p=av=ua$, and hence 
\[
\begin{pmatrix}
u_{11}& u_{12} \\
u_{21} & u_{22}\\
\end{pmatrix}_{p}
\begin{pmatrix}
a_{1} & 0 \\
0& a_{2}\\
\end{pmatrix}_{p}=\begin{pmatrix}
a_{1} & 0 \\
0& a_{2}\\
\end{pmatrix}_{p}\begin{pmatrix}
v_{11}& v_{12} \\
v_{21} & v_{22}\\
\end{pmatrix}_{p}=\begin{pmatrix}
0 & 0 \\
0& 1-p\\
\end{pmatrix}_{p}
\]
which means that $u_{11}a_{1}=a_{1}v_{11}=0=u_{21}a_{1}=a_{1}v_{12}$ and $u_{22}a_{2}=a_{2}v_{22}=1-p$. It follows that $a_{2}$ is invertible in $\overline{p}\mathcal{A}\overline{p}$. Since 
\[
uav+p=\begin{pmatrix}
0 & 0 \\
0 & 1-p\\
\end{pmatrix}_{p}\begin{pmatrix}
v_{11}& v_{12} \\
v_{21} & v_{22}\\
\end{pmatrix}_{p}+\begin{pmatrix}
p & 0 \\
0& 0\\
\end{pmatrix}_{p}=\begin{pmatrix}
p & 0 \\
0 & v_{11}\\
\end{pmatrix}_{p},
\]
one has $uav+p$ is invertible in $\mathcal{A}$. Let $c(uav+p)=1$. Then $a(1-p)+p=c$, and hence $a+p=(1+ap)c=c(1+ap)$. Note that $ap\in\mathcal{A}^{acc}$, one has $1+ap$ is generalized Drazin invertible. By \cite[Theorem 5.2]{Kol1996}, $a+p$ is generalized Drazin invertible.

$(3)\Rightarrow(4)$  Since $a+p$ is generalized Drazin invertible, it follows that there exists an idempotent $h\in\{a\}^{''}$ such that $a+p+h$ is invertible and $(a+p)h\in\mathcal{A}^{qnil}$. From $ap=pa$ one has $ph=hp$. Let $$f=(1-h)(1-p).$$ Then $fa=af$ and $f^{2}=(1-h)(1-p)(1-h)(1-p)=(1-h)(1-p)=f$. Hence, one has 
\[
a=\begin{pmatrix}
a_{1} & 0 \\
0 & a_{2}\\
\end{pmatrix}_{f}
\]
where $a_{1}=af$ and $a_{2}=a\overline{f}$. It follows $(a+p+h)f=af$ that $a_{1}$ is invertible in $f\mathcal{A}f$. Next we prove that $a_{2}\in(\overline{f}\mathcal{A}\overline{f})^{acc}$.  It suffices to prove that for $\lambda\in\mathbb{C}\backslash\{0\}$, there exists $\epsilon>0$ such that $\mu\overline{f}-a_{2}$ is invertible in $\overline{f}\mathcal{A}\overline{f}$ if $\mu\in B(\lambda, \epsilon)^{\circ}$.
Obverse that $(a+p)h\in\mathcal{A}^{qnil}$, one has $ah(1-p)\in\mathcal{A}^{qnil}$ since $$ah(1-p)=(a+p)h(1-p)$$ and $$(a+p)h(1-p)=(1-p)(a+p)h.$$ For any $\lambda\in\mathbb{C}\backslash\{0\}$, $\lambda-ah(1-p)$ is invertible in $\mathcal{A}$, which means that there exists $\epsilon_{1}>0$ such that $v_{1}-ah(1-p)$ is invertible in $\mathcal{A}$ when $v_{1}\in B(\lambda, \epsilon_{1})^{\circ}$. Since $ap\in\mathcal{A}^{acc}$, it follows that $\lambda\in{\rm iso\,}\sigma(ap)$, and hence there exists $\epsilon_{2}>0$ such that $v_{2}-ap$ is invertible in $\mathcal{A}$ when $v_{2}\in B(\lambda, \epsilon_{2})^{\circ}$. Let $$\epsilon=\min\{\epsilon_{1}, \epsilon_{2}\}.$$ Then both $\mu-ah(1-p)$ and $\mu-ap$ are invertible in $\mathcal{A}$ when $\mu\in B(\lambda, \epsilon)^{\circ}$. Let $d_{1}$ be the inverse of $\mu-ah(1-p)$, and $d_{2}$ be the inverse of $\mu-ap$. Since $h, p, a$ are  commuting, the set $\{d_{1}, d_{2}, \overline{f}, h , p, a\}$ is commutative. 
Moreover, as $$a_{2}=a\overline{f}=ah(1-p)+ap(1-h)+aph,$$ one has $$\mu\overline{f}-a_{2}=\mu h(1-p)+\mu p(1-h)+\mu ph-ah(1-p)-ap(1-h)-aph.$$ Define $$z=\overline{f}\big(d_{1}h(1-p)+d_{2}p(1-h)+d_{2}ph\big)\in\overline{f}\mathcal{A}\overline{f}.$$  Then 
\begin{equation*}
\begin{split}
z(\mu\overline{f}-a_{2})&=\overline{f}\big(d_{1}h(1-p)+d_{2}p(1-h)+d_{2}ph\big)\big(\mu h(1-p)+\mu p(1-h)\\
&\quad +\mu ph-ah(1-p)-ap(1-h)-aph\big)\\
&=\overline{f}\Big(d_{1}h(1-p)\big((\mu h(1-p))-ah(1-p)\big)+d_{2}p(1-h)\big(\mu p(1-h)\\
&\quad -ap(1-h)\big)+ d_{2}ph\big(\mu ph-aph\big)\Big)\\
&=\overline{f}\Big(d_{1}(\mu-ah(1-p))h(1-p)+ d_{2}(\mu-ap)p(1-h)+d_{2}(\mu-ap)ph\Big)\\
&=\overline{f}(h(1-p)+p(1-h)+ph)\\
&=\overline{f}(h+p-ph)=\overline{f}.
\end{split}
\end{equation*}
Thus, $\mu\overline{f}-a_{2}$ is invertible in $\overline{f}\mathcal{A}\overline{f}$, which implies that $a_{2}\in(\overline{f}\mathcal{A}\overline{f})^{acc}$.

$(4)\Rightarrow(5)$ Let $c\in f\mathcal{A}f$ satisfy $ca_{1}=a_{1}c=f$, and set $y=c+1-f$. Then $$y(af+1-f)=caf+1-f=1.$$ Similarly, one has $(af+1-f)c=1$. Thus, $af+1-f$ is invertible in $\mathcal{A}$. Moreover, since $a(1-f)=a_{2}\in(\overline{f}\mathcal{A}\overline{f})^{acc}$ and $\sigma(a(1-f))=\{0\}\cup\sigma(a_{2}, \overline{f}\mathcal{A}\overline{f})$, one can easily obtain that $a(1-f)\in\mathcal{A}^{acc}$.

$(5)\Rightarrow(1)$ Let $g$ be the inverse of $af+1-f$. Observe that $(af+1-f)f=af$ means that $f=gaf$. Set $x=gf$. Then $xa=gfa=f=ax$ and $xax=gfagf=fgf=gf=x$, and moreover, one has $a-a^{2}x=a-aagf=a-af=a(1-f)\in\mathcal{A}^{acc}$. Thus, $a$ is ag-Drazin invertible.
\end{proof}

Suppose that $a\in\mathcal{A}^{ad}$. Let ${\rm card}(a)$ denote the number of elements in the $\sigma(a-a^{2}a^{ad})$. If there exists $a^{ad}$ such that $\sigma(a-a^{2}a^{ad})$ is finite set, then ${\rm card}(a)<+\infty$. Otherwise, ${\rm card}(a)=+\infty$.  Since the spectrum of element is not empty, it follows  that ${\rm card}(a)>0$. 
\begin{proposition} \label{AG2}
Let $a\in\mathcal{A}$. Then the following statements are equivalent.
\begin{enumerate}
\item[(1)] $a\in\mathcal{A}^{ad}$ with ${\rm card}(a)<+\infty$.
\item[(2)] $a\in\mathcal{A}^{ed}$.
\item[(3)] $a\in\mathcal{A}^{d}$.
\end{enumerate}
\end{proposition}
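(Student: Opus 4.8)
The plan is to establish the cycle $(3)\Rightarrow(2)\Rightarrow(1)\Rightarrow(3)$, exploiting the known facts recorded in the excerpt: that $\mathcal{A}^{d}=\mathcal{A}^{ed}\subseteq\mathcal{A}^{ad}$ and that $\mathcal{A}^{qnil}\subset\mathcal{A}^{acc}$. The implication $(3)\Rightarrow(2)$ is immediate, since by definition $\mathcal{A}^{d}=\mathcal{A}^{ed}$, and for $(2)\Rightarrow(1)$ I would note that if $a\in\mathcal{A}^{ed}=\mathcal{A}^{d}$ with g-Drazin inverse $a^{d}$, then $a-a^{2}a^{d}\in\mathcal{A}^{qnil}$, so $\sigma(a-a^{2}a^{d})=\{0\}$; since $a^{d}$ is in particular an ag-Drazin inverse of $a$, this witnesses ${\rm card}(a)\le 1<+\infty$, giving $a\in\mathcal{A}^{ad}$ with ${\rm card}(a)<+\infty$.

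The substantive implication is $(1)\Rightarrow(3)$. Suppose $a\in\mathcal{A}^{ad}$ and choose an ag-Drazin inverse $x=a^{ad}$ for which $\sigma(a-a^{2}x)$ is a finite set. Set $b=a-a^{2}x=a(1-xa)$ and $e=xa$, an idempotent in $\{a\}^{'}$ with $ax=xa=e$ and $b=a(1-e)\in\mathcal{A}^{acc}$. The idea is that $b$ lives, up to the idempotent $1-e$, inside the corner $(1-e)\mathcal{A}(1-e)$, where it is "$\mathcal{A}^{acc}$ with finite spectrum"; I would argue that $b\in\mathcal{A}^{acc}$ together with $\sigma(b)$ finite forces $b$ to be quasinilpotent. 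Indeed, $b\in\mathcal{A}^{acc}$ means $b-\lambda 1\in\mathcal{A}^{d}$ for every $\lambda\neq 0$, so by \cite[Theorem 4.2]{Kol1996} no nonzero point of $\sigma(b)$ is an accumulation point of $\sigma(b)$; but $\sigma(b)$ is finite and hence has no accumulation points at all, so every nonzero point of $\sigma(b)$ would be isolated. The key extra input needed is that $b=a(1-e)$ cannot have a nonzero isolated spectral point: the decomposition $a=a e\oplus a(1-e)$ (in the $e$-corner picture) shows $\sigma(a(1-e))$ relative to $(1-e)\mathcal{A}(1-e)$ is exactly the "quasinilpotent part" of $a$'s spectrum in the sense dictated by the ag-Drazin structure. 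Concretely, one uses Theorem \ref{AG}(4): $a$ is ag-Drazin invertible iff there is an idempotent $f\in\{a\}'$ with $af$ invertible in $f\mathcal{A}f$ and $a(1-f)\in((1-f)\mathcal{A}(1-f))^{acc}$, and when $\sigma(a-a^{2}x)$ is finite the corner element $a_{2}=a(1-f)$ has finite spectrum in $(1-f)\mathcal{A}(1-f)$; being in that corner's $\mathcal{A}^{acc}$ with finite spectrum and no nonzero isolated point surviving, $a_{2}$ is quasinilpotent in $(1-f)\mathcal{A}(1-f)$, i.e. $\sigma(a_{2},(1-f)\mathcal{A}(1-f))=\{0\}$, whence $a-a^{2}x\in\mathcal{A}^{qnil}$ and $x$ is actually a g-Drazin inverse of $a$, so $a\in\mathcal{A}^{d}$.

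I expect the main obstacle to be the bookkeeping that rules out a nonzero isolated point of $\sigma(b)$ — that is, showing that the finiteness of $\sigma(a-a^{2}x)$, combined with membership in $\mathcal{A}^{acc}$, genuinely collapses the spectrum of the relevant corner element to $\{0\}$ rather than merely to a finite set of isolated points. The care is needed because a nonzero isolated point of $\sigma(b)$ is not by itself forbidden by $b\in\mathcal{A}^{acc}$; what forbids it is that such a point would correspond, via the associated spectral idempotent, to an invertible summand of $a$ that has been split off into the "wrong" corner $(1-f)$, contradicting the maximality/structure of the splitting in Theorem \ref{AG}(4) (equivalently, one can absorb that spectral idempotent into $f$ and shrink the corner). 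Once that spectral-projection argument is in place, the rest is formal, and the equivalence closes.
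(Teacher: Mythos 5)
Your cycle $(3)\Rightarrow(2)\Rightarrow(1)$ is fine, but the substantive implication $(1)\Rightarrow(3)$ contains a genuine error. You claim that if $x$ is an ag-Drazin inverse of $a$ with $\sigma(a-a^{2}x)$ finite, then $a-a^{2}x$ must be quasinilpotent, i.e.\ that $x$ is itself a g-Drazin inverse of $a$. This is false. Take $a=1$ and $x=0$: then $xa=ax$, $xax=x$, and $a-a^{2}x=1$ has finite spectrum $\{1\}$, hence lies in $\mathcal{A}^{acc}$, so $x=0$ is a legitimate ag-Drazin inverse witnessing ${\rm card}(a)<+\infty$ --- yet $a-a^{2}x=1$ is certainly not quasinilpotent. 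The ``key extra input'' you rely on, namely that the corner element $a(1-e)$ cannot have a nonzero isolated spectral point, does not hold: Theorem \ref{AG}(4) asserts the \emph{existence} of some splitting idempotent $f$, with no maximality built in, so nothing prevents an invertible summand of $a$ from sitting inside the $(1-f)$ corner. Your parenthetical remark about absorbing the spectral idempotent of the nonzero part of $\sigma(a_{2},\overline{f}\mathcal{A}\overline{f})$ into $f$ is the germ of a correct repair (it enlarges $f$ to an idempotent $f'$ with $af'$ invertible in $f'\mathcal{A}f'$ and $a(1-f')$ quasinilpotent, giving $a\in\mathcal{A}^{d}$ by Koliha's characterization), but as written your main conclusion --- ``$a-a^{2}x\in\mathcal{A}^{qnil}$ and $x$ is actually a g-Drazin inverse of $a$'' --- is simply wrong.

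The paper avoids all of this by running the cycle the other way: from $\sigma(a-a^{2}x)$ finite it concludes only that $0\notin{\rm acc\,}\sigma(a-a^{2}x)$, hence $a-a^{2}x\in\mathcal{A}^{d}$, so $x$ is an eg-Drazin inverse and $a\in\mathcal{A}^{ed}$; the step from $\mathcal{A}^{ed}$ to $\mathcal{A}^{d}$ is then delegated entirely to \cite[Theorem 2]{Mos2020}. You should either adopt that route or carry out the spectral-idempotent absorption in full; the latter amounts to reproving Mosi\'c's theorem rather than quoting it.
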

\begin{proof}
$(1)\Rightarrow (2)$ Assume that $a$ is ag-Drazin invertible and ${\rm card}(a)<+\infty$. Then there exists $x\in\mathcal{A}$ as an ag-Drazin inverse of $a$ such that $\sigma(a-a^{2}x)$ is finite set, and hence $0\notin{\rm acc\,}\sigma(a-a^{2}x)$, which means that $a-a^{2}x\in\mathcal{A}^{d}$. Therefore, $a$  is eg-Drazin invertible.

$(2)\Rightarrow (3)$ see \cite[Theorem 2]{Mos2020}.

$(3)\Rightarrow (1)$ It is obvious since $\mathcal{A}^{qnil}\subseteq\mathcal{A}^{acc}$.
\end{proof}

\begin{remark}
From the above proposition, one can see that if $a\in\mathcal{A}^{ad}$ with ${\rm card}(a)=+\infty$ then $a$ is not generalized Drazin invertible.
\end{remark}

\begin{proposition}\label{AG1}
Let $a\in\mathcal{A}^{ad}$ with ${\rm card}(a)=+\infty$. Then 
\begin{enumerate}
\item[(1)] There exists an idempotent element $p\in\{a\}$ such that $ap$ is invertible in $p\mathcal{A}p$ and $a(1-p)\in(\overline{p}\mathcal{A}\overline{p})^{acc}$ with infinite spectrum.
\item[(2)] There exists a sequence of nonzero isolated spectrum point of $a$ which converges to 0.
\end{enumerate}
\end{proposition}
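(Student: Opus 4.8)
The plan is to extract the idempotent directly from Theorem \ref{AG} and then run a Bolzano--Weierstrass argument in the corner algebra $\overline{p}\mathcal{A}\overline{p}$. For part (1), since $a\in\mathcal{A}^{ad}$, Theorem \ref{AG}(4) supplies an idempotent $p\in\{a\}'$ with $a=a_{1}\oplus a_{2}$ relative to $p$, where $a_{1}=ap$ is invertible in $p\mathcal{A}p$ and $a_{2}=a(1-p)\in(\overline{p}\mathcal{A}\overline{p})^{acc}$. So the only thing left to verify is that $a(1-p)$ has infinite spectrum, and this is exactly where ${\rm card}(a)=+\infty$ is used. Following the chain $(4)\Rightarrow(5)\Rightarrow(1)$ in the proof of Theorem \ref{AG} with $f=p$, the element $x=gp$, where $g$ is the inverse of $ap+1-p$, is an ag-Drazin inverse of $a$ with $a-a^{2}x=a(1-p)=a_{2}$. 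Hence $\sigma(a-a^{2}x)=\sigma_{\mathcal{A}}(a_{2})=\{0\}\cup\sigma(a_{2},\overline{p}\mathcal{A}\overline{p})$. Because ${\rm card}(a)=+\infty$, no ag-Drazin inverse of $a$ can make $\sigma(a-a^{2}a^{ad})$ finite, so this set is infinite; therefore $\sigma(a_{2},\overline{p}\mathcal{A}\overline{p})$, i.e. the spectrum of $a(1-p)$, is infinite, proving (1).

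For part (2), I would work inside $\overline{p}\mathcal{A}\overline{p}$, which is a Banach algebra with unit $\overline{p}$. Since $a_{2}=a(1-p)\in(\overline{p}\mathcal{A}\overline{p})^{acc}$, the element $a_{2}-\lambda\overline{p}$ is generalized Drazin invertible in $\overline{p}\mathcal{A}\overline{p}$ for every $\lambda\in\mathbb{C}\setminus\{0\}$, so by \cite[Theorem 4.2]{Kol1996} no nonzero scalar lies in ${\rm acc\,}\sigma(a_{2},\overline{p}\mathcal{A}\overline{p})$, that is, ${\rm acc\,}\sigma(a_{2},\overline{p}\mathcal{A}\overline{p})\subseteq\{0\}$. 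Since $\sigma(a_{2},\overline{p}\mathcal{A}\overline{p})$ is an infinite compact subset of $\mathbb{C}$, it must have an accumulation point, which is forced to be $0$; hence there is a sequence $(\mu_{n})$ in $\sigma(a_{2},\overline{p}\mathcal{A}\overline{p})\setminus\{0\}$ with $\mu_{n}\to 0$, and every nonzero point of this spectrum, in particular each $\mu_{n}$, is an isolated point of $\sigma(a_{2},\overline{p}\mathcal{A}\overline{p})$.

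It then remains to transfer these points to $\sigma(a)$. From the block-diagonal form, $a$ commutes with $p$, so $\sigma_{\mathcal{A}}(a)=\sigma(a_{1},p\mathcal{A}p)\cup\sigma(a_{2},\overline{p}\mathcal{A}\overline{p})$ and consequently ${\rm acc\,}\sigma_{\mathcal{A}}(a)={\rm acc\,}\sigma(a_{1},p\mathcal{A}p)\cup{\rm acc\,}\sigma(a_{2},\overline{p}\mathcal{A}\overline{p})$; this is the standard fact about spectra of elements diagonal with respect to an idempotent, which I would state explicitly. As $a_{1}$ is invertible in $p\mathcal{A}p$, the compact set $\sigma(a_{1},p\mathcal{A}p)$ avoids some neighbourhood $U$ of $0$, so for $n$ large we have $\mu_{n}\in U$, whence $\mu_{n}\in\sigma(a)$ but $\mu_{n}\notin{\rm acc\,}\sigma(a_{1},p\mathcal{A}p)\cup\{0\}\supseteq{\rm acc\,}\sigma(a)$, i.e. $\mu_{n}\in{\rm iso\,}\sigma(a)$. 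Discarding finitely many terms yields the required sequence of nonzero isolated points of $\sigma(a)$ converging to $0$. The only step needing genuine care is the passage from the hypothesis ${\rm card}(a)=+\infty$ to the infiniteness of $\sigma(a(1-p))$ via the concrete ag-Drazin inverse coming from Theorem \ref{AG}; once that identification is in place, everything else is Bolzano--Weierstrass plus the block-spectrum decomposition.
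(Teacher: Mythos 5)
Your proof is correct and follows essentially the same route as the paper: both extract the idempotent from Theorem \ref{AG}(4) and use the decomposition $\sigma(a)=\sigma(a_{1},p\mathcal{A}p)\cup\sigma(a_{2},\overline{p}\mathcal{A}\overline{p})$ to produce nonzero isolated points of $\sigma(a)$ accumulating at $0$. The only difference is in how the sequence is obtained --- the paper invokes Proposition \ref{AG2} to conclude $0\in{\rm acc\,}\sigma(a)$ and then pushes the sequence into the corner spectrum, whereas you first show $\sigma(a_{2},\overline{p}\mathcal{A}\overline{p})$ is infinite via a concrete ag-Drazin inverse and apply Bolzano--Weierstrass; both work, and yours makes the ``infinite spectrum'' clause of (1) more explicit.
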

\begin{proof}
Suppose that $a\in\mathcal{A}^{ad}$ with ${\rm card}(a)=+\infty$. By Theorem \ref{AG}, there exists an idempotent element $p\in\{a\}^{'}$ such that 
\begin{equation*}
 a=\left(\begin{array}{cccc}
 a_{1} & 0\\
 0 & a_{2}
 \end{array}\right)_{p},
\end{equation*}
where $a_{1}$ is invertible in $p\mathcal{A}p$ and $a_{2}\in(\overline{p}\mathcal{A}\overline{p})^{acc}$. Then $B(0; \epsilon)\subseteq\mathbb{C}\backslash\sigma(a_{1}, p\mathcal{A}p)$ when $\epsilon$ is enough small. Moreover, since ${\rm card}(a)=+\infty$, $0\in{\rm acc\,}\sigma(a)$ by Proposition \ref{AG2}, and hence there exists a sequence $\{\lambda_{n}\}\subseteq B(0; \epsilon)^{\circ}\cap\sigma(a)$ such that $\lambda_{n}\rightarrow 0 \ (n\rightarrow +\infty)$.  Using the fact $\sigma(a)=\sigma(a_{1}, p\mathcal{A}p)\cup\sigma(a_{2}, \overline{p}\mathcal{A}\overline{p})$, one can see that $$B(0; \epsilon)^{\circ}\cap\sigma(a)=B(0; \epsilon)^{\circ}\cap\sigma(a_{2}, \overline{p}\mathcal{A}\overline{p}),$$ and moreover, it follows from $a_{2}\in (\overline{p}\mathcal{A}\overline{p})^{acc}$ that $\{\lambda_{n}\}\subseteq{\rm iso\,}\sigma(a_{2}, \overline{p}\mathcal{A}\overline{p})$, which means that $\{\lambda_{n}\}\subseteq{\rm iso\,}\sigma(a)$. This proves (1) and (2).
\end{proof}
\begin{remark} \label{RAG}
Let $a\in\mathcal{A}^{ad}$ with ${\rm  card}(a)=+\infty$. By Proposition \ref{AG1}, one can see that there exists an idempotent $p\in\{a\}^{'}$ such that 
\begin{equation*}
 a=\left(\begin{array}{cccc}
 a_{1} & 0\\
 0 & a_{2}
 \end{array}\right)_{p},
\end{equation*}
where $a_{1}$ is invertible in $p\mathcal{A}p$, $\sigma(a_{2}, \overline{p}\mathcal{A}\overline{p})=\{0, \lambda_{1}, \lambda_{2}, \lambda_{3}, \cdots\}$ $(\lvert\lambda_{1}\rvert\geq\lvert\lambda_{2}\rvert\geq\lvert\lambda_{3}\rvert\geq\cdots$ , and $\lambda_{n}\rightarrow 0$ when $n\rightarrow +\infty$).  Thus, one can find $\epsilon>0$ such that $B(0, \epsilon)\subseteq\mathbb{C}\backslash\sigma(a_{1}, p\mathcal{A}p)$, and $\{\lambda_{n}\}_{n\geq N}\subset B(0, \epsilon)$ for some $N\in\mathbb{N}$. This also implies that $\{\lambda_{n}\}_{n\geq N}$ is a sequence of nonzero isolated spectrum points of $a$. Moreover, one can easily check that $$\sigma(a_{1}, p\mathcal{A}p)\cap\sigma(a, \overline{p}\mathcal{A}\overline{p})\subseteq\{\lambda_{1}, \lambda_{2}, \cdots, \lambda_{N}\}.$$ Similar to \cite{Aba2021}, for each $n\geq N$, one can define two closed set: $$\Delta_{n}=\{0, \lambda_{n+1}, \lambda_{n+2}, \cdots\}$$ and $$\Delta_{n}^{'}=(\sigma(a_{1}, p\mathcal{A}p)\backslash(\sigma(a_{1}, p\mathcal{A}p)\cap\sigma(a, \overline{p}\mathcal{A}\overline{p})))\cup\{\lambda_{1}, \lambda_{2}, \cdots, \lambda_{n}\}=\sigma(a)\backslash\Delta_{n}.$$ Obviously, for all $n\geq N$, $\Delta_{n}$ and $\Delta_{n}^{'}$ are disjoint  nonempty closed subset of $\sigma(a)$. 
\end{remark}

Now, we state a lemma which will be need in the following.
\begin{lemma} {\rm \cite[Page 204-205]{Con1990}} \label{CON}
Let $a\in\mathcal{A}$. If $\sigma(a)= F_{1}\cup F_{2}$, where $F_{1}$ and $F_{2}$  are disjoint  nonempty closed set, then there exists a nontrival idempotent element $e\in\{a\}^{''}$ such that 
\begin{equation*}
 a=\left(\begin{array}{cccc}
 a_{1} & 0\\
 0 & a_{2}
 \end{array}\right)_{e},
\end{equation*}
where $\sigma(a_{1})=F_{1}\cup\{0\}$, $\sigma(a_{2})=F_{2}\cup\{0\}$, $\sigma(a_{1}, e\mathcal{A}e)=F_{1}$ and $\sigma(a_{2}, \overline{e}\mathcal{A}\overline{e})=F_{2}$.
\end{lemma}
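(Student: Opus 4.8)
The plan is to realize $e$ as the spectral idempotent attached to $F_1$ via the Riesz holomorphic functional calculus of $a$. Since $F_1$ and $F_2$ are disjoint compact subsets of $\mathbb{C}$, I would first pick disjoint open sets $U_1\supseteq F_1$, $U_2\supseteq F_2$, set $U=U_1\cup U_2\supseteq\sigma(a)$, and let $f\colon U\to\mathbb{C}$ be the function equal to $1$ on $U_1$ and $0$ on $U_2$; it is holomorphic on $U$ and satisfies $f^{2}=f$. Put $e=f(a)=\frac{1}{2\pi i}\oint_{\Gamma_1}(\zeta-a)^{-1}\,d\zeta$, with $\Gamma_1\subseteq U_1$ a contour surrounding $F_1$. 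Multiplicativity of the functional calculus gives $e^{2}=e$, and the holomorphic spectral mapping theorem gives $\sigma(e)=f(\sigma(a))=\{0,1\}$ (both values attained since $F_1,F_2\neq\emptyset$), so $e$ is a nontrivial idempotent. Finally, if $b$ commutes with $a$ then $(\zeta-a)^{-1}b=b(\zeta-a)^{-1}$, hence $b$ commutes with the defining integral; thus $e\in\{a\}^{''}$.

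Next I would record the block form. Since $e$ commutes with $a$ and $e\overline{e}=0$, the off-diagonal entries $ea\overline{e}=ae\overline{e}=0$ and $\overline{e}ae=a\overline{e}e=0$ vanish, so $a=\begin{pmatrix}a_{1}&0\\0&a_{2}\end{pmatrix}_{e}$ with $a_{1}=eae=ea$ and $a_{2}=\overline{e}a\overline{e}=\overline{e}a$. Observe $a_{1}=(zf)(a)$ and $a_{2}=(z(1-f))(a)$, where the function $zf$ equals $z$ on $U_1$ and $0$ on $U_2$, and $z(1-f)$ equals $0$ on $U_1$ and $z$ on $U_2$. The spectral mapping theorem then yields $\sigma(a_{1})=(zf)(F_1\cup F_2)=F_1\cup\{0\}$ and, symmetrically, $\sigma(a_{2})=F_2\cup\{0\}$.

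For the corner spectra I would use two elementary facts about the decomposition $1=e+\overline{e}$: (i) for $x\in e\mathcal{A}e$ and $\lambda\neq0$, $x-\lambda 1=(x-\lambda e)+(-\lambda\overline{e})$ with summands in the complementary corners, so $x-\lambda1$ is invertible in $\mathcal{A}$ iff $x-\lambda e$ is invertible in $e\mathcal{A}e$; and (ii) a block-diagonal element is invertible in $\mathcal{A}$ iff each diagonal block is invertible in its corner. From (i), $\sigma(a_{1},e\mathcal{A}e)\setminus\{0\}=\sigma(a_{1},\mathcal{A})\setminus\{0\}=F_1\setminus\{0\}$, and likewise for $a_{2}$. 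It remains to decide whether $0$ lies in each corner spectrum, and here I expect the only genuine bookkeeping: if $0\notin F_1$, then after shrinking $U_1$ so that $0\notin U_1$ the function $\phi$ equal to $1/z$ on $U_1$ and $0$ on $U_2$ is holomorphic on $U$, satisfies $\phi(a)a=f(a)=e$ and $\phi(a)e=\phi(a)$, hence $\phi(a)\in e\mathcal{A}e$ and inverts $a_{1}$ there, so $0\notin\sigma(a_{1},e\mathcal{A}e)$. Conversely, if $0\in F_1$ then $0\notin F_2$, so by the same argument $a_{2}$ is invertible in $\overline{e}\mathcal{A}\overline{e}$; were $a_{1}$ invertible in $e\mathcal{A}e$ too, fact (ii) would make $a$ invertible in $\mathcal{A}$, contradicting $0\in F_1\subseteq\sigma(a)$. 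This gives $\sigma(a_{1},e\mathcal{A}e)=F_1$ and $\sigma(a_{2},\overline{e}\mathcal{A}\overline{e})=F_2$, finishing the proof. The main obstacle is not conceptual but precisely this tracking of the extra point $0$ — the spectrum of $a_i$ in $\mathcal{A}$ and in its corner can differ exactly by $\{0\}$ — while the rest is the standard Riesz decomposition.
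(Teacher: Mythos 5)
Your proof is correct and is precisely the standard Riesz--decomposition argument that the paper's citation to Conway points to (the paper itself gives no proof, only the reference): construct $e$ as the spectral idempotent $\frac{1}{2\pi i}\oint_{\Gamma_1}(\zeta-a)^{-1}\,d\zeta$, use the spectral mapping theorem for $zf$ and $z(1-f)$, and track the extra point $0$ when passing from $\sigma(a_i,\mathcal{A})$ to the corner spectra. All steps, including the double-commutant membership and the nontriviality of $e$, are handled correctly.
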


 It was proved in \cite[Theorem 4.4]{Kol1996} that if $a$ is generalized Drazin invertible, then there exists an idempotent element $p\in\{a\}^{'}$ such taht $a^{d}=(a+p)^{-1}(1-p)$. For ag-Drazin inverses, one has
\begin{theorem} \label{DCAD}
Let $a\in\mathcal{A}$. Then the following statements are equivalent.
\begin{enumerate}
\item[(1)] $a\in\mathcal{A}^{ad}$.
\item[(2)] There exists an idempotent element $p\in\{a\}^{''}$ such that $a+p$ is invertible in $\mathcal{A}$ and $ap\in\mathcal{A}^{acc}$. In this  case, $b=(a+p)^{-1}(1-p)$ is a ag-Drazin inverse of $a$.
\item[(3)] There exists an idempotent element $p\in\{a\}^{'}$ such that $a+p$ is invertible in $\mathcal{A}$ and $ap\in\mathcal{A}^{acc}$.
\end{enumerate}
\end{theorem}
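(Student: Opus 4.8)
The plan is to prove $(1)\Rightarrow(2)$, then note $(2)\Rightarrow(3)$ is trivial, and finally $(3)\Rightarrow(1)$; the bulk of the work is the first implication, where we must upgrade the idempotent produced in Theorem~\ref{AG} to one lying in $\{a\}^{''}$ while keeping the invertibility and the $\mathcal{A}^{acc}$ condition. First I would split into the two cases distinguished by $\mathrm{card}(a)$. If $\mathrm{card}(a)<+\infty$, then by Proposition~\ref{AG2} we have $a\in\mathcal{A}^{d}$, and the classical Koliha result \cite[Theorem 4.4]{Kol1996} gives an idempotent $p\in\{a\}^{''}$ with $a+p$ invertible and $(a+p)h\in\mathcal{A}^{qnil}\subseteq\mathcal{A}^{acc}$, where one checks $ap=a-a^2(a+p)^{-1}(1-p)\in\mathcal{A}^{qnil}\subseteq\mathcal{A}^{acc}$; so $(2)$ holds with $b=(a+p)^{-1}(1-p)=a^{d}$.

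The substantial case is $\mathrm{card}(a)=+\infty$. Here I would invoke Remark~\ref{RAG}: there is a sequence $\lambda_n\to 0$ of nonzero isolated points of $\sigma(a)$, and for each $n\ge N$ the sets $\Delta_n=\{0,\lambda_{n+1},\lambda_{n+2},\dots\}$ and $\Delta_n'=\sigma(a)\setminus\Delta_n$ form a disjoint nonempty closed partition of $\sigma(a)$. Applying Lemma~\ref{CON} to this partition yields, for each such $n$, a nontrivial idempotent $e_n\in\{a\}^{''}$ with $a=\mathrm{diag}(a_1^{(n)},a_2^{(n)})_{e_n}$, where $\sigma(a_1^{(n)},e_n\mathcal{A}e_n)=\Delta_n'$ and $\sigma(a_2^{(n)},\overline{e_n}\mathcal{A}\overline{e_n})=\Delta_n$. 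Since $0\notin\Delta_n'$, the corner $a_1^{(n)}=a e_n$ is invertible in $e_n\mathcal{A}e_n$, hence $a e_n+\overline{e_n}$ is invertible in $\mathcal{A}$. Set $p=\overline{e_n}=1-e_n$; then $p\in\{a\}^{''}$, and $a+p$ is a compression-equivalent rearrangement of $a e_n+\overline{e_n}$ that I would check is invertible by the same $2\times 2$ block computation used in Theorem~\ref{AG}, $(4)\Rightarrow(5)$. It remains to verify $ap\in\mathcal{A}^{acc}$: we have $\sigma(ap)=\{0\}\cup\sigma(a_2^{(n)},\overline{e_n}\mathcal{A}\overline{e_n})=\Delta_n=\{0,\lambda_{n+1},\lambda_{n+2},\dots\}$, a set whose only accumulation point is $0$, so $ap-\mu 1$ is invertible — in particular generalized Drazin invertible — for all $\mu\in\mathbb{C}\setminus\{0\}$ except possibly finitely many isolated $\lambda_k$, and at those isolated points $ap-\lambda_k 1$ is generalized Drazin invertible since $\lambda_k\in\mathrm{iso}\,\sigma(ap)$; hence $ap\in\mathcal{A}^{acc}$. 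For the claim that $b=(a+p)^{-1}(1-p)$ is then a ag-Drazin inverse, I would run exactly the verification in the proof of $(5)\Rightarrow(1)$ of Theorem~\ref{AG}: with $g=(a+p)^{-1}$ one has $gp=(a+p)^{-1}p$, and writing things out gives $ab=ba=1-p$, $bab=b$, and $a-a^2b=a p\in\mathcal{A}^{acc}$.

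The implication $(2)\Rightarrow(3)$ is immediate since $\{a\}^{''}\subseteq\{a\}^{'}$. For $(3)\Rightarrow(1)$, I would again mimic $(5)\Rightarrow(1)$ of Theorem~\ref{AG}, but now with $f$ replaced by $e=1-p$: from $a+p$ invertible with inverse $g$, set $x=g(1-p)$; since $(a+p)p=ap$ actually we need the cleaner relation — observe $(a+p)(1-p)=a(1-p)$, so one checks $xa=ax=(a+p)^{-1}(a(1-p))$; I would verify $ax=xa$, $xax=x$ directly from $g(a+p)=(a+p)g=1$ and $p\in\{a\}^{'}$, and then $a-a^2x=a-a^2g(1-p)$; using $a^2 g(1-p)=a\cdot(a+p)g(1-p)=a(1-p)$ (because $ap\cdot g(1-p)$ — here I would be careful: one needs $pg=gp$, which follows from $p\in\{a+p\}'$ hence $p$ commutes with $(a+p)^{-1}$) we get $a-a^2x=a-a(1-p)=ap\in\mathcal{A}^{acc}$, so $a\in\mathcal{A}^{ad}$.

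The main obstacle I anticipate is the $\mathrm{card}(a)=+\infty$ branch of $(1)\Rightarrow(2)$: specifically, producing a \emph{single} idempotent $p\in\{a\}^{''}$ (not merely $\{a\}^{'}$) that simultaneously makes $a+p$ honestly invertible in $\mathcal{A}$ and keeps $ap\in\mathcal{A}^{acc}$. Theorem~\ref{AG} only delivered $p\in\{a\}^{'}$ with $a+p\in\mathcal{A}^{d}$, which is weaker on both counts, so the spectral-projection machinery of Lemma~\ref{CON} applied to the partition $\sigma(a)=\Delta_n'\sqcup\Delta_n$ from Remark~\ref{RAG} is exactly what is needed — the key point being that $\Delta_n'$ is bounded away from $0$ (so its spectral idempotent gives genuine invertibility of a corner) while $\Delta_n$ retains $0$ as its only cluster point (so the complementary corner stays in $\mathcal{A}^{acc}$). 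One should also double-check that $\Delta_n'$ is genuinely closed and nonempty, which is guaranteed by Remark~\ref{RAG}, and that the $2\times2$ block identities converting "$ae_n+\overline{e_n}$ invertible" into "$a+p$ invertible with $b=(a+p)^{-1}(1-p)$ the inverse on the appropriate corner" go through verbatim as in Theorem~\ref{AG}.
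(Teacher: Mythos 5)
Your proposal follows essentially the same route as the paper: split on $\mathrm{card}(a)$, handle the finite case via Proposition~\ref{AG2} and Koliha, and in the infinite case apply Lemma~\ref{CON} to the partition $\sigma(a)=\Delta_n'\cup\Delta_n$ from Remark~\ref{RAG}, taking $p=1-e_n$; your direct verification of $(3)\Rightarrow(1)$ is a harmless variant of the paper's appeal to Theorem~\ref{AG}. The one place you gloss over a needed detail is the invertibility of $a+p$: this is \emph{not} a rearrangement of $ae_n+\overline{e_n}$, since $a+p=\bigl(\begin{smallmatrix}a_1&0\\0&a_2+\overline{e_n}\end{smallmatrix}\bigr)_{e_n}$, whose second corner is invertible in $\overline{e_n}\mathcal{A}\overline{e_n}$ only if $-1\notin\sigma(a_2,\overline{e_n}\mathcal{A}\overline{e_n})=\Delta_n$; the paper secures this by first fixing $n$ large enough that $\sup_{\lambda\in\Delta_n}\lvert\lambda\rvert<\tfrac12$ (possible since $\lambda_k\to 0$), and you should add that choice explicitly.
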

\begin{proof}
$(1)\Rightarrow (2)$ Assume that $a\in\mathcal{A}^{ad}$. If ${\rm card}(a)<+\infty$, then $a\in\mathcal{A}^{d}$ by Proposition \ref{AG2}, and hence (2) holds. On the other hand, if ${\rm card}(a)=+\infty$, then let $n\geq N$ be enough large such that $\sup_{\lambda\in\Delta_{n}}\lvert\lambda\rvert<\frac{1}{2}$ and $\sigma(a)=\Delta_{n}^{'}\cup\Delta_{n}$ due to Remark \ref{RAG}. It follows from Lemma \ref{CON} that there exists an idempotent element $e\in\{a\}^{''}$ such that 
\begin{equation*}
 a=\left(\begin{array}{cccc}
 a_{1} & 0\\
 0 & a_{2}
 \end{array}\right)_{e},
\end{equation*}
where $\sigma(a_{1})=\Delta_{n}^{'}\cup\{0\}$, $\sigma(a_{2})=\Delta_{n}$, $\sigma(a_{1}, e\mathcal{A}e)=\Delta_{n}^{'}$ and $\sigma(a_{2}, \overline{e}\mathcal{A}\overline{e})=\Delta_{n}$. Thus, $\sigma(a_{2})=\{0, \lambda_{n+1}, \lambda_{n+2}, \cdots\}$, which means that $a(1-e)\in\mathcal{A}^{acc}$.
Moreover, since $0\notin\sigma(a_{1}, e\mathcal{A}e)$, one has $a_{1}$ is invertible in $e\mathcal{A}e$. Notice that $\sup_{\lambda\in\Delta_{n}}\lvert\lambda\rvert<\frac{1}{2}$ implies 
\begin{equation*}
 a+1-e=\left(\begin{array}{cccc}
 a_{1} & 0\\
 0 & 1-e+a_{2}
 \end{array}\right)_{e} \ is\ invertible \ in \ \mathcal{A}.
\end{equation*}
Let $p=1-e$. Obviously, $a+p$ is invertible in $\mathcal{A}$ and $ap\in\mathcal{A}^{acc}$. Set $b=(a+p)^{-1}(1-p)$. Then $$bab=(a+p)^{-1}(1-p)(a+p)(a+p)^{-1}(1-p)=(a+p)^{-1}(1-p)=b$$ and $$a-a^{2}b=a-a(a+p)(a+p)^{-1}(1-p)=ap\in\mathcal{A}^{acc}.$$ Thus, $b$ is a ag-Drazin inverse of $a$.

$(2)\Rightarrow(3)$ It is obvious.

$(3)\Rightarrow(1)$ Since $a+p$ is invertible, one has $a+p\in\mathcal{A}^{d}$. By Theorem \ref{AG}, $a$ is ag-Drazin invertible.
\end{proof}
\begin{corollary}
If $a\in\mathcal{A}^{ad}$ with ${\rm Card}(a)=+\infty$, then ag-Drazin inverses of $a$ are not unique. 
\end{corollary}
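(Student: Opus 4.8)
The plan is to produce, from a single such $a$, two genuinely different spectral splittings of $\sigma(a)$ and to read off from each an ag-Drazin inverse of $a$ via the formula of Theorem \ref{DCAD}; the two inverses will then be separated by their associated spectral idempotents. By Proposition \ref{AG2}, the hypothesis ${\rm card}(a)=+\infty$ forces $a\notin\mathcal{A}^{d}$, hence $0\in{\rm acc\,}\sigma(a)$, and by Proposition \ref{AG1} together with Remark \ref{RAG} there are distinct nonzero isolated points $\lambda_{1},\lambda_{2},\cdots$ of $\sigma(a)$ with $\lvert\lambda_{1}\rvert\geq\lvert\lambda_{2}\rvert\geq\cdots$ and $\lambda_{n}\to 0$, an integer $N$, and, for each $n\geq N$, disjoint nonempty closed sets $\Delta_{n}=\{0,\lambda_{n+1},\lambda_{n+2},\cdots\}$ and $\Delta_{n}'=\sigma(a)\setminus\Delta_{n}$ with $\sigma(a)=\Delta_{n}'\cup\Delta_{n}$. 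Fix $m\geq N$ large enough that $\sup_{\lambda\in\Delta_{m}}\lvert\lambda\rvert<\frac{1}{2}$; then also $\sup_{\lambda\in\Delta_{m+1}}\lvert\lambda\rvert<\frac{1}{2}$, since $\Delta_{m+1}\subseteq\Delta_{m}$.

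Exactly as in the proof of $(1)\Rightarrow(2)$ of Theorem \ref{DCAD}, Lemma \ref{CON} applied to the two splittings $\sigma(a)=\Delta_{m}'\cup\Delta_{m}$ and $\sigma(a)=\Delta_{m+1}'\cup\Delta_{m+1}$ produces idempotents $e,e'\in\{a\}^{''}$ with
\begin{equation*}
 a=\left(\begin{array}{cc} a_{1} & 0\\ 0 & a_{2}\end{array}\right)_{e}
 \quad {\rm and}\quad
 a=\left(\begin{array}{cc} a_{1}' & 0\\ 0 & a_{2}'\end{array}\right)_{e'},
\end{equation*}
where $a_{1}=eae$ is invertible in $e\mathcal{A}e$ with $\sigma(a_{1},e\mathcal{A}e)=\Delta_{m}'$ and $\sigma(a_{2},\overline{e}\mathcal{A}\overline{e})=\Delta_{m}$, and likewise $\sigma(a_{1}',e'\mathcal{A}e')=\Delta_{m+1}'$, $\sigma(a_{2}',\overline{e'}\mathcal{A}\overline{e'})=\Delta_{m+1}$. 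Putting $p=1-e$ and $p'=1-e'$, Theorem \ref{DCAD} guarantees that $b=(a+p)^{-1}(1-p)$ and $b'=(a+p')^{-1}(1-p')$ are ag-Drazin inverses of $a$. Expanding $a+p=a+1-e$ and $1-p=e$ in blocks over $e$ and inverting the resulting block-diagonal element (the second diagonal block $\overline{e}+a_{2}$ being invertible in $\overline{e}\mathcal{A}\overline{e}$ because $\sup_{\lambda\in\Delta_{m}}\lvert\lambda\rvert<1$), one finds that $b$ is the inverse of $a_{1}$ in $e\mathcal{A}e$ extended by $0$ on $\overline{e}\mathcal{A}\overline{e}$; in particular $ba=ab=e$, and symmetrically $b'a=e'$.

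It remains to see $e\neq e'$. If $e=e'$, then $a_{1}=eae=e'ae'=a_{1}'$ in the common corner algebra $e\mathcal{A}e$, so $\Delta_{m}'=\sigma(a_{1},e\mathcal{A}e)=\sigma(a_{1}',e'\mathcal{A}e')=\Delta_{m+1}'$; but $\lambda_{m+1}\in\Delta_{m}$ while $\lambda_{m+1}\notin\Delta_{m+1}$, so $\lambda_{m+1}\in\Delta_{m+1}'\setminus\Delta_{m}'$, a contradiction. Hence $e\neq e'$, whence $ba=e\neq e'=b'a$ and so $b\neq b'$, which shows the ag-Drazin inverse of $a$ is not unique. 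The only delicate point is the block computation identifying $b$ with the corner inverse of $a_{1}$ (and thus $ba=e$); this is already implicit in the proof of Theorem \ref{DCAD}, and it is precisely what lets us distinguish $b$ from $b'$ through the idempotents $e$ and $e'$. All the rest is bookkeeping with the spectral splittings of Remark \ref{RAG}.
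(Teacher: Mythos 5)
Your proposal is correct and follows essentially the same route as the paper: both use two different spectral splittings $\sigma(a)=\Delta_n'\cup\Delta_n$ from Remark \ref{RAG} and Lemma \ref{CON} to produce two distinct idempotents, each yielding an ag-Drazin inverse that is the corner inverse of the invertible block extended by zero, and both distinguish the two inverses by showing the idempotents differ via their corner spectra. Your observation that $ba=e$ and $b'a=e'$ makes the final step slightly more direct than the paper's contradiction argument, but the substance is identical.
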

\begin{proof}
 Let $n\geq N$ be enough large such that $\sup_{\lambda\in\Delta_{n}}\lvert\lambda\rvert<\frac{1}{2}$ due to Remark \ref{RAG}. Then $$\Delta_{n}=\{0, \lambda_{n+1}, \lambda_{n+2}, \cdots\}.$$ By Theorem \ref{DCAD}, there exists an idempotent element $e\in\{a\}^{''}$ such that 
\begin{equation*}
 a=\left(\begin{array}{cccc}
 a_{1} & 0\\
 0 & a_{2}
 \end{array}\right)_{e},
\end{equation*}
where $a_{1}$ is invertible in $e\mathcal{A}e$ and $a_{2}\in(\overline{e}\mathcal{A}\overline{e})^{acc}$. In this case, 
\begin{equation*}
 b=\left(\begin{array}{cccc}
 b_{1} & 0\\
 0 & 0
 \end{array}\right)_{e}
\end{equation*}
is a ag-Drazin inverse of $a$, where $b_{1}$ is inverse of $a_{1}$ in $e\mathcal{A}e$. Let $n_{1}>n$ and $$\Delta_{n_{1}}=\{0, \lambda_{n_{1}+1}, \lambda_{n_{1}+2}, \cdots\}.$$ Then  there exists an idempotent element $f\in\{a\}^{''}$ such that 
\begin{equation*}
 a=\left(\begin{array}{cccc}
 a^{'}_{1} & 0\\
 0 & a^{'}_{2}
 \end{array}\right)_{f},
\end{equation*}
where $a^{'}_{1}$ is invertible in $f\mathcal{A}f$ and $a^{'}_{2}\in(\overline{f}\mathcal{A}\overline{f})^{acc}$. In this case, 
\begin{equation*}
 c=\left(\begin{array}{cccc}
 c_{1} & 0\\
 0 & 0
 \end{array}\right)_{f}
\end{equation*}
is a ag-Drazin inverse of $a$, where $c_{1}$ is a inverse of $a^{'}_{1}$ in $f\mathcal{A}f$. It follows from $\sigma(a_{1})=\Delta^{'}_{n}\cup\{0\}\neq\Delta^{'}_{n_{1}}\cup\{0\}=\sigma(a^{'}_{1})$ that $a_{1}\neq a^{'}_{1}$, that is, $e\neq f$. Next, we will prove $b\neq c$. Indeed, if $c=b$, then $c_{1}=b_{1}\in e\mathcal{A}e\cap f\mathcal{A}f$, and hence $$e=b_{1}a_{1}=b_{1}ea=b_{1}fa=c_{1}a^{'}_{1}=f.$$ It is a contradiction with $e\neq f$. Thus,  ag-Drazin inverses of $a$ are not unique.
\end{proof}
\begin{remark}
From the above corollary, we know that if $a\in \mathcal{A}^{ad}\backslash\mathcal{A}^{d}$, then the set of all ag-Drazin inverse of $a$ is an infinite set. Therefore, if $a\in \mathcal{A}^{ad}$, then $a$ has unique ag-Drazin inverse if and only if $a$ is generalized Drazin invertible.
\end{remark}
Let $\overline{M}$ denote the closure of the set $M$.
The proof of the following theorem about ag-Drazin inverses is inspired by
that of Theorem 2.9 in \cite{Aba2021}. 
\begin{theorem} \label{GDS}
Let $a\in\mathcal{A}$. Then $a\in\mathcal{A}^{ad}$ if and only if $0\notin{\rm acc}\sigma_{d}(a)$.
\end{theorem}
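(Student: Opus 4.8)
The plan is to establish both implications using the identity $\sigma_{d}(a)={\rm acc\,}\sigma(a)$ (the translate of \cite[Theorem 4.2]{Kol1996}), the block characterization Theorem \ref{AG}(4), and the spectral splitting Lemma \ref{CON}. Throughout I will use that for an idempotent $e\in\{a\}'$, writing $a=a_{1}+a_{2}$ with $a_{1}=eae\in e\mathcal{A}e$ and $a_{2}=\overline{e}a\overline{e}\in\overline{e}\mathcal{A}\overline{e}$, one has $\sigma(a-\lambda 1)=\sigma(a_{1}-\lambda e,e\mathcal{A}e)\cup\sigma(a_{2}-\lambda\overline{e},\overline{e}\mathcal{A}\overline{e})$, and that Koliha's criterion for generalized Drazin invertibility is available verbatim inside the corner Banach algebras $e\mathcal{A}e$ and $\overline{e}\mathcal{A}\overline{e}$.

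For $a\in\mathcal{A}^{ad}\Rightarrow 0\notin{\rm acc\,}\sigma_{d}(a)$, I would take the idempotent $f\in\{a\}'$ given by Theorem \ref{AG}(4), so that $a_{1}=faf$ is invertible in $f\mathcal{A}f$ and $a_{2}=\overline{f}a\overline{f}\in(\overline{f}\mathcal{A}\overline{f})^{acc}$. Since $0\notin\sigma(a_{1},f\mathcal{A}f)$ and this set is closed, there is $\delta>0$ with $0\notin\sigma(a_{1}-\lambda f,f\mathcal{A}f)$ for all $|\lambda|<\delta$; and $a_{2}\in(\overline{f}\mathcal{A}\overline{f})^{acc}$ says exactly that ${\rm acc\,}\sigma(a_{2},\overline{f}\mathcal{A}\overline{f})\subseteq\{0\}$, so $0\notin{\rm acc\,}\sigma(a_{2}-\lambda\overline{f},\overline{f}\mathcal{A}\overline{f})$ for every $\lambda\neq0$. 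Substituting into the displayed decomposition of $\sigma(a-\lambda 1)$ yields $0\notin{\rm acc\,}\sigma(a-\lambda 1)$, i.e. $a-\lambda 1\in\mathcal{A}^{d}$, whenever $0<|\lambda|<\delta$; hence $B(0,\delta)^{\circ}\cap\sigma_{d}(a)=\emptyset$ and $0\notin{\rm acc\,}\sigma_{d}(a)$.

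For the converse, assume $0\notin{\rm acc\,}\sigma_{d}(a)$. If $0\notin\sigma_{d}(a)$ then $a\in\mathcal{A}^{d}\subseteq\mathcal{A}^{ad}$; so suppose $0\in\sigma_{d}(a)={\rm acc\,}\sigma(a)$ and fix $\epsilon_{0}>0$ with $B(0,\epsilon_{0})^{\circ}\cap\sigma_{d}(a)=\emptyset$. Every point of $\sigma(a)\cap B(0,\epsilon_{0})^{\circ}$ is then isolated in $\sigma(a)$, so this set is at most countable, and I would pick $\epsilon\in(0,\epsilon_{0})$ for which the circle $\{|\lambda|=\epsilon\}$ avoids $\sigma(a)$. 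Put $F_{2}=\sigma(a)\cap\{|\lambda|<\epsilon\}$ and $F_{1}=\sigma(a)\cap\{|\lambda|>\epsilon\}$: these are disjoint closed sets with union $\sigma(a)$, $F_{2}\neq\emptyset$ (it contains $0$), and ${\rm acc\,}F_{2}\subseteq\sigma_{d}(a)\cap\{|\lambda|\le\epsilon\}\subseteq\{0\}$. If $F_{1}=\emptyset$ then ${\rm acc\,}\sigma(a)\subseteq\{0\}$, so ${\rm acc\,}\sigma(a-\lambda 1)\subseteq\{-\lambda\}$ for $\lambda\neq0$, giving $a-\lambda 1\in\mathcal{A}^{d}$ and thus $a\in\mathcal{A}^{acc}\subseteq\mathcal{A}^{ad}$. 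If $F_{1}\neq\emptyset$, I would invoke Lemma \ref{CON} to get a nontrivial idempotent $e\in\{a\}''\subseteq\{a\}'$ with $\sigma(a_{1},e\mathcal{A}e)=F_{1}$ and $\sigma(a_{2},\overline{e}\mathcal{A}\overline{e})=F_{2}$. Since $0\notin F_{1}$, $a_{1}$ is invertible in $e\mathcal{A}e$; since ${\rm acc\,}\sigma(a_{2},\overline{e}\mathcal{A}\overline{e})={\rm acc\,}F_{2}\subseteq\{0\}$, Koliha's criterion inside $\overline{e}\mathcal{A}\overline{e}$ gives $a_{2}-\mu\overline{e}\in(\overline{e}\mathcal{A}\overline{e})^{d}$ for all $\mu\neq0$, i.e. $a_{2}\in(\overline{e}\mathcal{A}\overline{e})^{acc}$. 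Then Theorem \ref{AG}(4) gives $a\in\mathcal{A}^{ad}$.

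The hard part will be the converse: one must split $\sigma(a)$ by a circle that genuinely misses the spectrum — which relies on the observation that $\sigma(a)$ meets a small punctured disc around $0$ in an at most countable set of isolated points — and one must dispatch the degenerate case $F_{1}=\emptyset$ (where $\sigma(a)$ already clusters only at $0$) separately, since Lemma \ref{CON} needs two nonempty pieces. Once Lemma \ref{CON} is applicable, recognizing the invertible block and the $\mathcal{A}^{acc}$-block is routine. A recurring minor point is the transfer of Koliha's spectral criterion into the corner algebras $e\mathcal{A}e$ and $\overline{e}\mathcal{A}\overline{e}$, together with the constant use of $\sigma_{d}(\cdot)={\rm acc\,}\sigma(\cdot)$.
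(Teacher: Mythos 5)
Your argument is correct, and its overall architecture coincides with the paper's: the forward implication via the block form of Theorem \ref{AG}(4) together with $\sigma_d(\cdot)={\rm acc\,}\sigma(\cdot)$ is essentially verbatim the paper's proof, and the converse likewise runs through a splitting of $\sigma(a)$ into two disjoint closed sets, Lemma \ref{CON}, and Theorem \ref{AG}(4). Where you genuinely diverge is in how the splitting is produced. The paper sets $\delta=\inf\{\lvert\lambda\rvert:\lambda\in{\rm acc}({\rm iso\,}\sigma(a))\setminus\{0\}\}$, takes $F_1=B(0,\delta/2)\cap\overline{{\rm iso\,}\sigma(a)}$ and $F_2=(\sigma_d(a)\setminus\{0\})\cup(\overline{{\rm iso\,}\sigma(a)}\setminus F_1)$, and splits into two cases according to whether ${\rm acc}({\rm iso\,}\sigma(a))=\{0\}$; verifying that these sets are closed and disjoint takes some care. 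You instead observe that $\sigma(a)\cap B(0,\epsilon_0)^{\circ}$ consists of isolated points of $\sigma(a)$ and is therefore countable, choose a circle $\lvert\lambda\rvert=\epsilon$ missing $\sigma(a)$, and cut along it; closedness and disjointness of the two pieces are then immediate, and the case distinction disappears, at the price of a countability argument and of treating the degenerate case $F_1=\emptyset$ separately (which you do correctly, landing in $\mathcal{A}^{acc}\subseteq\mathcal{A}^{ad}$ --- a case the paper's Case~2 is meant to cover but states only sketchily). Both routes are sound; yours is the cleaner execution of the same idea and is in fact closer in spirit to the argument of Theorem~2.9 of \cite{Aba2021} that the paper cites as its inspiration.
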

\begin{proof}
Suppose that $a\in\mathcal{A}^{ad}$. By Theorem \ref{AG}, there exists an idempotent $f\in\{a\}^{'}$ such that   
\begin{equation*}
  a=\left(\begin{array}{cccc}
 a_{1} & 0\\
 0 & a_{2}
 \end{array}\right)_{f},
\end{equation*}
where $a_{1}$ is invertible in $f\mathcal{A}f$ and $a_{2}\in(\overline{f}\mathcal{A}\overline{f})^{acc}$. Thus, there exists $\epsilon>0$ such that $a_{1}-\lambda f$ is invertible and $\lambda\in{\rm iso}\sigma(a_{2}, \overline{f}\mathcal{A}\overline{f})$ if $\lambda\in B(0; \epsilon)^{\circ}$, which means that $\lambda\notin{\rm acc}\sigma_{d}(a)$.

Conversely, assume that $0\notin{\rm acc}\sigma_{d}(a)$. If $0\notin{\rm acc}\sigma(a)$, then $a\in\mathcal{A}^{d}$, and hence $a\in\mathcal{A}^{ad}$. On the other hand, if $0\in{\rm acc}\sigma(a)$, then $0\in{\rm acc}({\rm iso}\sigma(a))$. We distinguish two cases.\\
\underline{Case 1}. If ${\rm acc}({\rm iso}\sigma(a))\neq\{0\}$. Set $\delta=\inf\limits_{\lambda\in{\rm acc}({\rm iso}\sigma(a))\backslash\{0\}}\lvert\lambda\rvert$. Then there exists a sequence $(\lambda_{n})\subset{\rm acc}({\rm iso}\sigma(a))\backslash\{0\}$ such that $\lambda_{n}\longrightarrow \lambda \, (n\longrightarrow\infty)$. Since $(\lambda_{n})\subset{\rm acc}({\rm iso}\sigma(a))\backslash\{0\}\subset{\rm acc}\sigma(a)=\sigma_{d}(a)$ and $0\notin{\rm acc}\sigma_{d}(a)$, one has $\delta>0$. Obviously, the set $F_{1}=B(0, \frac{\delta}{2})\cap\overline{{\rm iso}\sigma(a)}$ is a closed subset of $\sigma(a)$. Let $F_{2}=(\sigma_{d}(a)\backslash\{0\})\cup(\overline{{\rm iso}\sigma(a)}\backslash F_{1})$.  Then $F_{1}\cap F_{2}\neq\emptyset$ and $\sigma(a)=F_{1}\cup F_{2}$. Since $0\notin {\rm acc}\sigma_{d}(a)$, one can obtain that $\sigma_{d}(a)\backslash\{0\}$ is closed. Moreover, without loss of generality, we may assume that $\mu\in\overline{\overline{{\rm iso}\sigma(a)}\backslash F_{1}}\subset\sigma(a)$. There exists a sequence $(\mu_{n})\subset\overline{{\rm iso}\sigma(a)}\backslash F_{1}$ such that $\mu_{n}\longrightarrow \mu \, (n\longrightarrow\infty)$ , and hece $\mu\neq 0$. If $\mu\in F_{1}$, then $\mu\in{\rm iso}\sigma(a)$. It is a contradiction. Thus, $\mu\in F_{2}$, which means that $F_{2}$ is closed.  From Lemma \ref{CON}, there exists an idempotent element $e\in\{a\}^{''}$ such that 
\begin{equation*}
 a=\left(\begin{array}{cccc}
 a_{1} & 0\\
 0 & a_{2}
 \end{array}\right)_{e},
\end{equation*}
where $\sigma(a_{1}, e\mathcal{A}e)=F_{1}$ and $\sigma(a_{2}, \overline{e}\mathcal{A}\overline{e})=F_{2}$. Hence, $a_{1}\in  (e\mathcal{A}e)^{acc}$ and $a_{2}$ is invertible in $\overline{e}\mathcal{A}\overline{e}$. By Theorem \ref{AG}, $a$ is ag-Drazin invertible.\\
\underline{Case 2} If ${\rm acc}({\rm iso}\sigma(a))=\{0\}$, then there exists $\delta>0$ such that $\sup\limits_{\lambda\in{\rm iso}\sigma(a)}\lvert\lambda\rvert$. Let $F_{1}=B(0, \frac{\delta}{2})\cap\overline{{\rm iso}\sigma(a)}$ and $F_{2}=(\sigma_{d}(a)\backslash\{0\})\cup(\overline{{\rm iso}\sigma(a)}\backslash F_{1})$. Similar the proof of Case 1, one can obtain that $a$ is ag-Drazin invertible.
\end{proof}
Let $\sigma_{ad}(a)$ denote the ag-Drazin spectrum of $a$. From Theorem \ref{GDS}, one has $\sigma_{ad}(a)={\rm acc}\sigma_{d}(a)$. It was proved in \cite{YKZ} that $\sigma_{d}(ac)=\sigma_{d}(bd)$ provided that $a, b, c, d\in\mathcal{A}$ satisfy $acd=dbd$ and $dba=aca$. In this case, one has $\sigma_{ad}(ac)={\rm acc}\sigma_{d}(ac)={\rm acc}\sigma_{d}(bd)=\sigma_{ad}(bd)$. Thus, one can obtain the following result.
\begin{corollary}
Let $a, b, c ,d \in\mathcal{A}$. If $a, b, c, d$ satisfy $acd=dbd$ and $dba=aca$, then $\sigma_{ad}(ac)=\sigma_{ad}(bd)$.
\end{corollary}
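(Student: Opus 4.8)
The plan is to reduce the statement to the generalized Drazin spectral identity for $\sigma_{d}$ that is quoted in the paragraph preceding the corollary, using the observation that $\sigma_{ad}$ is precisely the set of accumulation points of $\sigma_{d}$.

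First I would establish the identity $\sigma_{ad}(x)={\rm acc\,}\sigma_{d}(x)$ for every $x\in\mathcal{A}$. By definition, $\lambda\in\sigma_{ad}(x)$ means $x-\lambda 1\notin\mathcal{A}^{ad}$, and by Theorem \ref{GDS} this is equivalent to $0\in{\rm acc\,}\sigma_{d}(x-\lambda 1)$. Since the generalized Drazin spectrum is translation invariant (indeed $\sigma_{d}(x)={\rm acc\,}\sigma(x)$ by \cite[Theorem 4.2]{Kol1996}, so $\sigma_{d}(x-\lambda 1)=\sigma_{d}(x)-\lambda$), we have $0\in{\rm acc\,}\sigma_{d}(x-\lambda 1)$ if and only if $\lambda\in{\rm acc\,}\sigma_{d}(x)$. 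Hence $\sigma_{ad}(x)={\rm acc\,}\sigma_{d}(x)$, which is exactly the content of the remark following Theorem \ref{GDS}.

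Next I would invoke the cited result from \cite{YKZ}: under the hypotheses $acd=dbd$ and $dba=aca$ one has $\sigma_{d}(ac)=\sigma_{d}(bd)$. Applying the operator ${\rm acc\,}$ to both sides of this equality of sets gives ${\rm acc\,}\sigma_{d}(ac)={\rm acc\,}\sigma_{d}(bd)$, and combining this with the identity of the previous paragraph yields $\sigma_{ad}(ac)={\rm acc\,}\sigma_{d}(ac)={\rm acc\,}\sigma_{d}(bd)=\sigma_{ad}(bd)$, as desired. (The degenerate case in which these spectra are empty is harmless, since then all four sets above are empty.)

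The argument is essentially a two-line deduction, so there is no genuine obstacle; the only point requiring any care is the first step, namely upgrading Theorem \ref{GDS} — a statement about ag-Drazin invertibility of a single element, i.e.\ about membership of $0$ in ${\rm acc\,}\sigma_{d}$ — to the global description $\sigma_{ad}(x)={\rm acc\,}\sigma_{d}(x)$ of the ag-Drazin spectrum. This rests only on translation invariance of $\sigma_{d}$, but it is the linchpin that transports the known identity $\sigma_{d}(ac)=\sigma_{d}(bd)$ to the new setting. I would note in passing that the same reasoning applies to any spectral quantity obtained from $\sigma_{d}$ by a topological derived-set operation.
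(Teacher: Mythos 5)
Your proposal is correct and follows exactly the paper's own argument: the paper likewise derives $\sigma_{ad}(x)={\rm acc\,}\sigma_{d}(x)$ from Theorem \ref{GDS} and then applies the identity $\sigma_{d}(ac)=\sigma_{d}(bd)$ from \cite{YKZ}. Your additional remark on translation invariance of $\sigma_{d}$ merely makes explicit a step the paper leaves implicit.
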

From the above corollary, one can easily get the following two results.
\begin{corollary}
Let $a, b, c \in\mathcal{A}$. If $a, b, c$ satisfy $aca=aba$, then $\sigma_{ad}(ac)=\sigma_{ad}(ba)$.
\end{corollary}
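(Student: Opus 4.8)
The plan is to obtain this identity as an immediate specialization of the preceding corollary, by making a judicious choice of the fourth element. Concretely, I would apply the corollary on $a,b,c,d$ with the substitution $d=a$.

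First I would verify that the two hypotheses required by that corollary hold under this substitution. The condition $acd=dbd$ becomes $aca=aba$, which is precisely the assumed identity. The condition $dba=aca$ becomes $aba=aca$, which is the very same identity (just with the two sides interchanged). Hence both equalities needed to invoke the previous corollary are available from the single hypothesis $aca=aba$.

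It then follows that $\sigma_{ad}(ac)=\sigma_{ad}(bd)$, and since $d=a$ this reads $\sigma_{ad}(ac)=\sigma_{ad}(ba)$, as claimed. There is no genuine obstacle in this argument; the only point that warrants a moment's care is checking that the lone hypothesis $aca=aba$ really does furnish both of the intertwining relations demanded by the earlier corollary, which it does because those two relations collapse to one and the same equation once $d$ is taken to be $a$.
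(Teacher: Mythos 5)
Your proposal is correct and is exactly the specialization the paper intends: it states that this corollary follows "from the above corollary," and setting $d=a$ there turns both hypotheses $acd=dbd$ and $dba=aca$ into the single assumed identity $aca=aba$, giving $\sigma_{ad}(ac)=\sigma_{ad}(bd)=\sigma_{ad}(ba)$. Nothing further is needed.
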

\begin{corollary}
If $a, b \in\mathcal{A}$, then $\sigma_{ad}(ab)=\sigma_{ad}(ba)$.
\end{corollary}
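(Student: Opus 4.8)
The plan is to derive this as an immediate specialization of what has already been proved, so the proof will be very short; I will sketch two routes and then point out the one subtlety.

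The quickest route is to invoke the preceding corollary --- the one asserting that $aca=aba$ forces $\sigma_{ad}(ac)=\sigma_{ad}(ba)$ --- with the choice $c:=b$. With that substitution the hypothesis $aca=aba$ becomes the tautology $aba=aba$, hence holds automatically, and the conclusion reads exactly $\sigma_{ad}(ab)=\sigma_{ad}(ba)$. One could equally feed $(a,b,c,d):=(a,b,b,a)$ into the four-variable corollary: the required identities $acd=dbd$ and $dba=aca$ both reduce to $aba=aba$, while the conclusion $\sigma_{ad}(ac)=\sigma_{ad}(bd)$ becomes the claim. Nothing else is needed.

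If one prefers a self-contained argument not routed through \cite{YKZ}, I would instead use Theorem~\ref{GDS} in the form $\sigma_{ad}(x)={\rm acc}\,\sigma_d(x)$, so that it suffices to prove ${\rm acc}\,\sigma_d(ab)={\rm acc}\,\sigma_d(ba)$. Here one recalls the classical fact $\sigma(ab)\setminus\{0\}=\sigma(ba)\setminus\{0\}$ together with the identity $\sigma_d(x)={\rm acc}\,\sigma(x)$, which is a direct consequence of \cite[Theorem 4.2]{Kol1996} and $\sigma(x-\lambda 1)=\sigma(x)-\lambda$; from these one gets $\sigma_d(ab)\setminus\{0\}=\sigma_d(ba)\setminus\{0\}$, and taking accumulation points and using that deleting the single point $0$ from a subset of $\mathbb{C}$ never changes its set of accumulation points yields ${\rm acc}\,\sigma_d(ab)={\rm acc}\,\sigma_d(ba)$.

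There is essentially no obstacle; the only thing that warrants a line of care is the status of the point $0$, since the background results a priori control only the punctured spectra $\sigma_d(ab)\setminus\{0\}$ and $\sigma_d(ba)\setminus\{0\}$. This is harmless because $0$ is an accumulation point of a set exactly when it is an accumulation point of that set with $0$ removed, so the possible discrepancy at $0$ cannot affect ${\rm acc}$. I would therefore present the final proof in the short form via the preceding corollary, perhaps with a parenthetical remark recording the identity $\sigma_{ad}(x)={\rm acc}\,\sigma_d(x)$ that makes the reduction transparent.
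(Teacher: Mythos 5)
Your primary route is exactly the paper's: the paper derives this corollary by specializing the four-variable corollary (equivalently, the three-variable one), just as you do with $(a,b,c,d)=(a,b,b,a)$, where the hypotheses $acd=dbd$ and $dba=aca$ become tautologies. Your alternative self-contained argument via $\sigma_{ad}(x)={\rm acc}\,\sigma_d(x)$ and $\sigma(ab)\setminus\{0\}=\sigma(ba)\setminus\{0\}$ is also sound, but the main proof matches the paper.
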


In \cite{Kol1996}, Koliha proved that if $a\in\mathcal{A}^{d}$ and $b\in\mathcal{A}^{qnil}\cap\{a\}^{'}$ then $a+b\in\mathcal{A}^{d}$. For ag-Drazin inverses, one has
\begin{proposition}
If $a\in\mathcal{A}^{ad}$, and let $b\in\mathcal{A}^{qnil}\cap\{a\}^{'}$, then $a+b\in\mathcal{A}^{ad}$.
\end{proposition}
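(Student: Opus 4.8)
The plan is to prove the stronger fact that $\sigma_{d}(a+b)=\sigma_{d}(a)$ and then quote Theorem \ref{GDS}. First I would record the commutativity facts that make everything work: since $ab=ba$, the element $b$ commutes with $a-\lambda 1$ for every $\lambda\in\mathbb{C}$ and also with $a+b$, and of course $-b\in\mathcal{A}^{qnil}$ as well. Now fix $\lambda\in\rho_{d}(a)$. Then $a-\lambda 1\in\mathcal{A}^{d}$ and $b\in\mathcal{A}^{qnil}\cap\{a-\lambda 1\}^{'}$, so the perturbation result of Koliha recalled above \cite{Kol1996} gives $(a+b)-\lambda 1=(a-\lambda 1)+b\in\mathcal{A}^{d}$, i.e. $\lambda\in\rho_{d}(a+b)$. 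Running the same argument with the pair $(a+b,-b)$ in place of $(a,b)$ yields $\rho_{d}(a+b)\subseteq\rho_{d}(a)$, hence $\sigma_{d}(a+b)=\sigma_{d}(a)$.

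Consequently ${\rm acc\,}\sigma_{d}(a+b)={\rm acc\,}\sigma_{d}(a)$. Because $a\in\mathcal{A}^{ad}$, Theorem \ref{GDS} gives $0\notin{\rm acc\,}\sigma_{d}(a)$, so $0\notin{\rm acc\,}\sigma_{d}(a+b)$, and a second application of Theorem \ref{GDS} shows $a+b\in\mathcal{A}^{ad}$.

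There is no genuinely hard step here; the only point requiring a little care is that the perturbation result has to be applied in both directions, which is legitimate precisely because $-b$ is again quasinilpotent and commutes with $a+b$, so one obtains the equality (not merely an inclusion) of the generalized Drazin spectra. If instead one wants an argument that exhibits an explicit ag-Drazin inverse, one can take the idempotent $p\in\{a\}^{''}$ supplied by Theorem \ref{DCAD}(2): then $p$ commutes with both $a$ and $b$, so $p\in\{a+b\}^{'}$; the element $(a+b)+p=(a+p)+b$ is invertible, being an invertible element plus a commuting quasinilpotent; and $(a+b)p=ap+bp\in\mathcal{A}^{acc}$, since $bp$ is quasinilpotent and commutes with $ap\in\mathcal{A}^{acc}$ (apply Koliha's theorem to $ap-\lambda 1$ for each $\lambda\neq 0$). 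Theorem \ref{DCAD}(3) then yields $a+b\in\mathcal{A}^{ad}$.
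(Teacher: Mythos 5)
Your primary argument is correct but takes a genuinely different route from the paper. You prove $\sigma_{d}(a+b)=\sigma_{d}(a)$ by applying Koliha's perturbation theorem to $a-\lambda 1$ for each $\lambda\in\rho_{d}(a)$ (and to $(a+b)-\lambda 1$ with $-b$ for the reverse inclusion), and then invoke the spectral characterization $a\in\mathcal{A}^{ad}\Leftrightarrow 0\notin{\rm acc\,}\sigma_{d}(a)$ of Theorem \ref{GDS} twice. The paper instead argues at the level of idempotents: it takes the $p\in\{a\}^{''}$ from Theorem \ref{DCAD} with $a+p$ invertible and $ap\in\mathcal{A}^{acc}$, notes $b$ commutes with $p$, gets $a+b+p\in\mathcal{A}^{d}$, and verifies $(a+b)p\in\mathcal{A}^{acc}$ via the Gelfand-theory inclusion $\sigma((a+b)p)\subseteq\sigma(ap)+\sigma(bp)=\sigma(ap)$, before closing with Theorem \ref{AG}. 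Your route is shorter and actually yields the stronger statement $\sigma_{ad}(a+b)=\sigma_{ad}(a)$, at the cost of leaning on the nontrivial Theorem \ref{GDS}; the paper's route stays self-contained within the idempotent machinery and exhibits that the same idempotent $p$ witnesses the ag-Drazin invertibility of $a+b$. Your closing sketch of an explicit-inverse variant is essentially the paper's proof, with the only difference that you certify $(a+b)p\in\mathcal{A}^{acc}$ by applying Koliha's perturbation result to $ap-\lambda 1$ for each $\lambda\neq 0$ rather than by the spectral sum inclusion; both verifications are valid.
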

\begin{proof}
By Theorem \ref{DCAD}, there exists an idempotent $p\in\{a\}^{''}$ such that $a+p$ is invertible in $\mathcal{A}$ and $ap\in\mathcal{A}^{acc}$. Since $ab=ba$, the set $\{a, b, p\}$ is commutative. Then $a+b+p\in\mathcal{A}^{d}$. Moreover, since $ab=ba$ and $bp\in\mathcal{A}^{qnil}$, it follows from Gelfand theory
for commutative Banach algebras that $$\sigma((a+b)p)\subset\sigma(ap)+\sigma(bp)=\sigma(ap),$$ which means that $(a+b)p\in\mathcal{A}^{acc}$. In view of Theorem \ref{AG}, $a+b$ is ag-Drazin invertible.
\end{proof}

\begin{proposition}
Let $a\in\mathcal{A}^{ad}$ and $b\in\mathcal{A}^{d}$. If $ab=ba=0$, then $a+b\in\mathcal{A}^{ad}$.
\end{proposition}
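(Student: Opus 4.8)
The plan is to reduce the statement to the spectral characterization of Theorem \ref{GDS}: since $x\in\mathcal{A}^{ad}$ if and only if $0\notin{\rm acc}\,\sigma_{d}(x)$, it suffices to show $0\notin{\rm acc}\,\sigma_{d}(a+b)$. The first thing I would record is the elementary identity $\sigma_{d}(x)={\rm acc}\,\sigma(x)$ valid for every $x\in\mathcal{A}$: indeed, $\lambda\in\sigma_{d}(x)$ means $x-\lambda 1\notin\mathcal{A}^{d}$, which by \cite[Theorem 4.2]{Kol1996} is equivalent to $0\in{\rm acc}\,\sigma(x-\lambda 1)={\rm acc}\,\sigma(x)-\lambda$, i.e. to $\lambda\in{\rm acc}\,\sigma(x)$. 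In particular $\sigma_{d}(x)$ is always closed.

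Next I would use the hypothesis $ab=ba=0$ to compare the ordinary spectra of $a+b$, $a$ and $b$. The key computation is that, for every $\lambda\in\mathbb{C}\backslash\{0\}$,
\[
(a-\lambda 1)(b-\lambda 1)=(b-\lambda 1)(a-\lambda 1)=-\lambda(a+b-\lambda 1),
\]
since $ab=ba=0$. Because the two factors $a-\lambda 1$ and $b-\lambda 1$ commute, their product is invertible if and only if both of them are invertible; dividing by $-\lambda$ this shows that $a+b-\lambda 1$ is invertible if and only if $a-\lambda 1$ and $b-\lambda 1$ are both invertible. Hence $\sigma(a+b)\backslash\{0\}=(\sigma(a)\cup\sigma(b))\backslash\{0\}$. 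Since accumulation points are unaffected by adjoining or deleting the single point $0$, this yields ${\rm acc}\,\sigma(a+b)={\rm acc}\,\sigma(a)\cup{\rm acc}\,\sigma(b)$, that is, $\sigma_{d}(a+b)=\sigma_{d}(a)\cup\sigma_{d}(b)$, and therefore ${\rm acc}\,\sigma_{d}(a+b)={\rm acc}\,\sigma_{d}(a)\cup{\rm acc}\,\sigma_{d}(b)$.

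Finally I would assemble the pieces. From $a\in\mathcal{A}^{ad}$ and Theorem \ref{GDS} we get $0\notin{\rm acc}\,\sigma_{d}(a)$; from $b\in\mathcal{A}^{d}\subseteq\mathcal{A}^{ad}$ and Theorem \ref{GDS} again we get $0\notin{\rm acc}\,\sigma_{d}(b)$ (equivalently, $0\notin\sigma_{d}(b)$ and $\sigma_{d}(b)$ is closed). Combining the two displays above, $0\notin{\rm acc}\,\sigma_{d}(a+b)$, and one last application of Theorem \ref{GDS} gives $a+b\in\mathcal{A}^{ad}$.

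I do not expect a genuine obstacle here; the strong hypothesis $ab=ba=0$ forces the spectra to combine cleanly, and the only routine points to be careful about are the set-topological fact that ${\rm acc}$ ignores finitely many points and the verification of the factorization identity. The mildest subtlety is simply invoking the correct direction of Theorem \ref{GDS} for $b$ via the inclusion $\mathcal{A}^{d}\subseteq\mathcal{A}^{ad}$. A more hands-on alternative, parallel to the preceding proposition, is available as well: take the spectral idempotent $p\in\{a\}''$ of $a$ from Theorem \ref{DCAD} and the spectral idempotent $q\in\{b\}''$ of $b$ from Koliha's quasi-polar decomposition of $\mathcal{A}^{d}$; using $ab=ba=0$ one checks $\{a,b,p,q\}$ commute and $(1-p)$ and $(1-q)$ are orthogonal, so that $f=(1-p)+(1-q)$ is an idempotent in $\{a+b\}'$ with $(a+b)f$ invertible in $f\mathcal{A}f$ and $(a+b)(1-f)\in\mathcal{A}^{acc}$, whence Theorem \ref{AG} applies — but this route is longer than the spectral one above.
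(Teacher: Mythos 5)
Your proof is correct, but it takes a genuinely different route from the paper's. You reduce everything to the spectral criterion of Theorem \ref{GDS}: using the identity $(a-\lambda 1)(b-\lambda 1)=-\lambda(a+b-\lambda 1)$ (valid since $ab=ba=0$) together with the fact that a product of commuting elements is invertible iff both factors are, you get $\sigma(a+b)\backslash\{0\}=(\sigma(a)\cup\sigma(b))\backslash\{0\}$, hence $\sigma_d(a+b)=\sigma_d(a)\cup\sigma_d(b)$, and then $0\notin{\rm acc}\,\sigma_d(a+b)$ follows from $a\in\mathcal{A}^{ad}$ and $b\in\mathcal{A}^{d}\subseteq\mathcal{A}^{ad}$. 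All the ingredients check out: the translation-invariant form $\sigma_d(x)={\rm acc}\,\sigma(x)$ of Koliha's theorem, the invariance of ${\rm acc}$ under deleting the single point $0$, and the distribution of ${\rm acc}$ over finite unions. The paper instead argues algebraically: it takes an ag-Drazin inverse $x\in\{a\}''$ of $a$ (from Theorem \ref{DCAD}) and the g-Drazin inverse $b^{d}\in\{b\}''$, uses $ab=ba=0$ to deduce $xb=bx=0$ and $ab^{d}=b^{d}a=0$, and verifies directly that $x+b^{d}$ satisfies the defining identities, with $(a+b)-(a+b)^{2}(x+b^{d})=(a-a^{2}x)+(b-b^{2}b^{d})\in\mathcal{A}^{acc}$. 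The trade-off: the paper's argument is constructive and yields the explicit formula $(a+b)^{ad}=a^{ad}+b^{d}$ (and in fact would prove the stronger statement with $b\in\mathcal{A}^{ad}$ in place of $b\in\mathcal{A}^{d}$, modulo the closure of $\mathcal{A}^{acc}$ under orthogonal sums), whereas your spectral argument is shorter, avoids the double-commutant bookkeeping, and likewise generalizes immediately to $b\in\mathcal{A}^{ad}$ — but it produces no formula for the inverse. Your sketched second route via orthogonal spectral idempotents is essentially a hybrid of the two and is not needed.
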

\begin{proof}
By Theorem \ref{DCAD}, there exists $x\in\{a\}^{''}$ such that  $x$ is a ag-Drazin inverse of $a$. Then $xb=bx=x^{2}ab=0$ and $ab^{d}=b^{d}a=0$, and hence $$(a+b)(x+b^{d})^{2}=ax^{2}+b(b^{d})^{2}=x+b^{d}$$ and $a+b-(a+b)^{2}(x+b^{d})=(a-a^{2}x)+(b-b^{2}b^{d})$. Since $b-b^{2}b^{d}\in\mathcal{A}^{qnil}$, one has  $a+b-(a+b)^{2}(x+b^{d})\in\mathcal{A}^{acc}$. Thus, $a+b$ is ag-Drazin invertible.
\end{proof}
The following decomposition of an ag-Drazin invertible element generalizes the
important core-quasinilpotent decomposition of a generalized Drazin invertible element given
in \cite[Theorem 6.4]{Kol1996}.
\begin{theorem}
Let $a\in\mathcal{A}$. Then $a\in\mathcal{A}^{ad}$ if and only if $a=x+y$, where $xy=0=yx$, $x\in\mathcal{A}^{\sharp}$ and $y\in\mathcal{A}^{acc}$.
\end{theorem}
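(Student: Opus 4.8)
The plan is to prove the two directions separately, using the matrix decomposition from Theorem~\ref{AG}(4) as the bridge between the abstract condition $a\in\mathcal{A}^{ad}$ and the concrete sum decomposition. The key observation is that the idempotent $f\in\{a\}^{'}$ supplied by Theorem~\ref{AG}(4) splits $a$ into a part $a_1$ that is invertible in the corner algebra $f\mathcal{A}f$ (hence group invertible when viewed in $\mathcal{A}$, with group inverse the extension by $0$ of its inverse in $f\mathcal{A}f$) and a part $a_2\in(\overline f\mathcal{A}\overline f)^{acc}$ (which, extended by $0$, lands in $\mathcal{A}^{acc}$, exactly as was checked in the proof of $(4)\Rightarrow(5)$).

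For the forward direction, suppose $a\in\mathcal{A}^{ad}$. Apply Theorem~\ref{AG} to obtain the idempotent $f\in\{a\}^{'}$ with $a=\bigl(\begin{smallmatrix}a_1&0\\0&a_2\end{smallmatrix}\bigr)_f$, $a_1$ invertible in $f\mathcal{A}f$, $a_2\in(\overline f\mathcal{A}\overline f)^{acc}$. Set $x=af=a_1$ and $y=a(1-f)=a_2$ (viewed in $\mathcal{A}$). Then $a=x+y$, and since $f$ is idempotent and commutes with $a$, we get $xy=afa(1-f)=a^2f(1-f)=0$ and likewise $yx=0$. For $x\in\mathcal{A}^{\sharp}$: if $c\in f\mathcal{A}f$ is the inverse of $a_1$ there, then $x^{\sharp}=c$ works, since $xc=cx=f$ commutes with $x$, $cxc=c$, and $x^2c=a_1f=a_1=x$ (so $\mathrm{ind}(x)\le 1$). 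For $y\in\mathcal{A}^{acc}$: this is precisely the content of the $(4)\Rightarrow(5)$ argument — $\sigma(a(1-f))=\{0\}\cup\sigma(a_2,\overline f\mathcal{A}\overline f)$ forces $a(1-f)-\lambda 1$ to be invertible, hence generalized Drazin invertible, for small nonzero $\lambda$, and for large $\lambda$ invertibility is automatic; one still needs to check the generalized Drazin invertibility of $a(1-f)-\lambda 1$ for \emph{every} nonzero $\lambda$, but $a_2\in(\overline f\mathcal{A}\overline f)^{acc}$ gives $0\notin\mathrm{acc}\,\sigma(a_2,\overline f\mathcal{A}\overline f)$ at each nonzero $\lambda$ after translation, so $a(1-f)\in\mathcal{A}^{acc}$.

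For the converse, suppose $a=x+y$ with $xy=yx=0$, $x\in\mathcal{A}^{\sharp}$, $y\in\mathcal{A}^{acc}$. Let $p=xx^{\sharp}=x^{\sharp}x$, an idempotent in $\{x\}^{'}$. First I would show $p\in\{a\}^{'}$: from $xy=yx=0$ we get $py=xx^{\sharp}y=0$ (using $x^{\sharp}=x^{\sharp}xx^{\sharp}$ and $xy=0$) and similarly $yp=0$, so $p$ commutes with $y$, hence with $a=x+y$. Next, relative to $p$ we have $a=\bigl(\begin{smallmatrix}a_1&0\\0&a_2\end{smallmatrix}\bigr)_p$ with $a_1=pap=px=xp=x$ (since $py=0$) and $a_2=\overline p a\overline p=\overline p y\overline p=y$ (since $px=x$ gives $\overline p x=0$). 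Now $a_1=x$ is invertible in $p\mathcal{A}p$ with inverse $x^{\sharp}$, and $a_2=y$; it remains to see $y\in(\overline p\mathcal{A}\overline p)^{acc}$. Here I would use that $y=\overline p y\overline p$ lives in the corner, so $\sigma(y,\overline p\mathcal{A}\overline p)\subseteq\sigma(y)\cup\{0\}$ and in fact $\sigma(y)=\sigma(y,\overline p\mathcal{A}\overline p)\cup\{0\}$ (the $p$-corner contributes only $0$ since $py=0$); then for nonzero $\lambda$, $y-\lambda\overline p$ generalized Drazin invertible in $\overline p\mathcal{A}\overline p$ follows from $y-\lambda 1\in\mathcal{A}^{d}$ by restricting to the corner (a standard corner argument, e.g.\ via the spectral idempotent), giving $y\in(\overline p\mathcal{A}\overline p)^{acc}$. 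Then Theorem~\ref{AG}(4) yields $a\in\mathcal{A}^{ad}$.

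The main obstacle I anticipate is the bookkeeping in the converse around passing the $\mathcal{A}^{acc}$ property from $\mathcal{A}$ down to the corner $\overline p\mathcal{A}\overline p$ (and back in the forward direction): one must be careful that generalized Drazin invertibility in a corner algebra is equivalent to it in $\mathcal{A}$ for elements supported in that corner, which is true but needs the observation that the Koliha idempotent of such an element can be taken inside the corner, or equivalently that $\sigma(y-\lambda 1)=\sigma_{\overline p\mathcal{A}\overline p}(y-\lambda\overline p)\cup\{-\lambda\}$ with the two pieces separated for $\lambda\neq 0$. Everything else — the algebraic identities $xy=yx=0$, the idempotent computations, and the invocation of Theorem~\ref{AG} — is routine.
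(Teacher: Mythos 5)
Your proof is correct, and the forward direction is essentially the paper's: both extract an idempotent $p$ commuting with $a$ (the paper takes $p=ab$ with $b$ an ag-Drazin inverse, you take $f$ from Theorem~\ref{AG}(4), which amounts to the same splitting) and set $x=ap$, $y=a(1-p)$, checking $x\in\mathcal{A}^{\sharp}$ and $y\in\mathcal{A}^{acc}$ by the same computations. Where you genuinely diverge is the converse. The paper simply tests $b=x^{\sharp}$ against the definition of an ag-Drazin inverse: from $x^{\sharp}y=(x^{\sharp})^{2}xy=0=yx^{\sharp}$ one gets $ab=ba$, $bab=b$, and $a-a^{2}b=y\in\mathcal{A}^{acc}$, so $a\in\mathcal{A}^{ad}$ in three lines with no spectral theory. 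You instead build the idempotent $p=xx^{\sharp}$, put $a$ in block-diagonal form, and invoke Theorem~\ref{AG}(4), which forces you into the corner-algebra issue you flag: transferring the $\mathcal{A}^{acc}$ property of $y$ down to $(\overline{p}\mathcal{A}\overline{p})^{acc}$. Your sketch of that transfer is sound --- $\sigma(y)=\{0\}\cup\sigma(y,\overline{p}\mathcal{A}\overline{p})$ plus Koliha's characterization ($0\notin\mathrm{acc}\,\sigma$) applied in the corner algebra with unit $\overline{p}$ does the job --- but it is machinery the problem does not require. The trade-off: your route makes the structural content (the idempotent and the block decomposition) explicit and reuses Theorem~\ref{AG} as a black box, while the paper's route is shorter and purely algebraic, exploiting the fact that the definition of $\mathcal{A}^{ad}$ can be verified directly once a candidate inverse is in hand. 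If you keep your version, spell out the corner-spectrum lemma rather than leaving it as "a standard corner argument"; otherwise the direct verification with $b=x^{\sharp}$ is preferable.
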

\begin{proof}
Supposet that $a\in\mathcal{A}^{ad}$. Let $p=ab$, where $b$ is a ag-Drazin inverse. Then $p^{2}=p$ and $a(1-p)\in\mathcal{A}^{acc}$. Since $apbap=pap=ap$ and $bapb=b$, $ap\in\mathcal{A}^{\sharp}$. Let $x=ap$ and $y=a(1-p)$. Then $a=x+y$, $xy=0=yx$, $x\in\mathcal{A}^{\sharp}$ and $y\in\mathcal{A}^{acc}$.

Conversely, let $b=x^{\sharp}$. It follows from $by=b^{2}xy=0=yb$ that $ba=b(x+y)=bx=xb=(x+y)b=ab$, $bab=bxb=b$ and $a-aba=(x+y)-(x+y)b(x+y)=x+y-xbx=y\in\mathcal{A}^{acc}$. Thus, $a\in\mathcal{A}^{acc}$. 
\end{proof}

\section{ag-Drazin inverse for operators}
Let $L(\mathcal{X})$ denote the algebra of all bounded linear operators on a complex Banach space $\mathcal{X}$. In this section,
we characterize an ag-Drazin inverse of $T\in L(\mathcal{X})$ in terms of the direct sum of operators. 
Let $\mathcal{X}_{1}$ and $\mathcal{X}_{2}$ be two subspaces of $\mathcal{X}$. A closed subspace $\mathcal{X}_{1}$ is complement if there exists closed subspace $\mathcal{X}_{2}$ such that $\mathcal{X}=\mathcal{X}_{1}\oplus\mathcal{X}_{2}$. For $T\in B(\mathcal{X})$, we call $\mathcal{X}_{1}$ is $T$-invariant if $T(\mathcal{X}_{1})\subseteq\mathcal{X}_{1}$. We define $T\vert_{\mathcal{X}_{1}}: \ \mathcal{X}_{1}\rightarrow\mathcal{X}_{1}$ by $T\vert_{\mathcal{X}_{1}}x=Tx,\ x\in\mathcal{X}_{1}$. If $\mathcal{X}_{1}$ and $\mathcal{X}_{2}$ are two closed $T$-invariant subspaces of $\mathcal{X}$ such that $\mathcal{X}=\mathcal{X}_{1}\oplus\mathcal{X}_{2}$, it is said that $T$ is completely reduced by the pair $(\mathcal{X}_{1}, \mathcal{X}_{2})$, and denote it by $(\mathcal{X}_{1}, \mathcal{X}_{2})\in\hbox{Red}(T)$.

\begin{proposition}
Let $T\in L(\mathcal{X})$. Then the following statements are equivalent.
\begin{enumerate}
\item[(1)] $a\in\mathcal{A}^{ad}$.
\item[(2)] There exists $(\mathcal{X}_{1}, \mathcal{X}_{2})\in{\rm Red}(T)$ such that $T=T\vert_{\mathcal{X}_{1}}\oplus T\vert_{\mathcal{X}_{2}}$, where $T\vert_{\mathcal{X}_{1}}$ is generalized Drazin invertible and $\sigma_{d}(T\vert_{\mathcal{X}_{2}})\subseteq\{0\}$.
\item[(3)] There exists $(\mathcal{M}, \mathcal{N})\in{\rm Red}(T)$ such that $T=T\vert_{\mathcal{M}}\oplus T\vert_{\mathcal{N}}$, where $T\vert_{\mathcal{M}}$ is invertible and $\sigma_{d}(T\vert_{\mathcal{N}})\subseteq\{0\}$.
\end{enumerate}
\end{proposition}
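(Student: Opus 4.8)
The plan is to prove the chain $(1)\Rightarrow(2)\Rightarrow(3)\Rightarrow(1)$, with the bulk of the work consisting in transferring the algebraic splitting furnished by Theorem~\ref{AG} into the language of invariant subspaces and reduced pairs. Throughout I identify an idempotent $p\in L(\mathcal{X})$ with the topological direct sum decomposition $\mathcal{X}=\mathcal{R}(p)\oplus\mathcal{N}(p)$, and I use the standard fact that if $p$ commutes with $T$ then both $\mathcal{R}(p)$ and $\mathcal{N}(p)$ are $T$-invariant, so that $(\mathcal{R}(p),\mathcal{N}(p))\in\mathrm{Red}(T)$ and $T=T\vert_{\mathcal{R}(p)}\oplus T\vert_{\mathcal{N}(p)}$. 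The key dictionary entry is: the compression $pTp$ acting on $p\mathcal{A}p$ is, under the identification $p\mathcal{A}p\cong L(\mathcal{R}(p))$, exactly the operator $T\vert_{\mathcal{R}(p)}$, and likewise for the complement; in particular $\sigma(pTp,p\mathcal{A}p)=\sigma(T\vert_{\mathcal{R}(p)})$ and the generalized Drazin spectra agree as well.

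For $(1)\Rightarrow(2)$: assuming $T\in L(\mathcal{X})^{ad}$, apply Theorem~\ref{AG}(4) to obtain an idempotent $f\in\{T\}'$ with $T=\left(\begin{smallmatrix}a_1&0\\0&a_2\end{smallmatrix}\right)_f$, where $a_1=fTf$ is invertible in $f\mathcal{A}f$ and $a_2=\overline{f}T\overline{f}\in(\overline{f}\mathcal{A}\overline{f})^{acc}$. Put $\mathcal{X}_1=\mathcal{R}(f)$ and $\mathcal{X}_2=\mathcal{N}(f)$. Then $T\vert_{\mathcal{X}_1}$ is invertible, hence a fortiori generalized Drazin invertible, and $a_2\in(\overline{f}\mathcal{A}\overline{f})^{acc}$ says precisely that $\overline{f}T\overline{f}-\lambda\overline{f}$ is generalized Drazin invertible in $\overline{f}\mathcal{A}\overline{f}$ for every $\lambda\neq0$, i.e. $\sigma_d(T\vert_{\mathcal{X}_2})\subseteq\{0\}$. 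This is even slightly stronger than claimed in (2) (invertibility rather than g-Drazin invertibility of the first summand), but that is harmless and in fact yields $(1)\Rightarrow(3)$ at the same time, with $\mathcal{M}=\mathcal{X}_1$, $\mathcal{N}=\mathcal{X}_2$. The implication $(3)\Rightarrow(2)$ is then immediate since invertible operators are generalized Drazin invertible.

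It remains to close the loop with $(2)\Rightarrow(1)$ (or, more economically, $(3)\Rightarrow(1)$, which also covers $(2)\Rightarrow(1)$ via $(2)\Rightarrow(3)$—but that direction is not obvious, so I will instead prove $(2)\Rightarrow(1)$ directly). Given $(\mathcal{X}_1,\mathcal{X}_2)\in\mathrm{Red}(T)$ with $T\vert_{\mathcal{X}_1}$ generalized Drazin invertible and $\sigma_d(T\vert_{\mathcal{X}_2})\subseteq\{0\}$, let $p$ be the projection of $\mathcal{X}$ onto $\mathcal{X}_1$ along $\mathcal{X}_2$; then $p\in\{T\}'$. Let $S_1\in L(\mathcal{X}_1)$ be the generalized Drazin inverse of $T\vert_{\mathcal{X}_1}$ and set $S=S_1\oplus 0$ (i.e. $S$ acts as $S_1$ on $\mathcal{X}_1$ and as $0$ on $\mathcal{X}_2$). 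A direct computation on each summand gives $ST=TS$, $STS=S$, and $T-T^2S=0\oplus T\vert_{\mathcal{X}_2}$ on $\mathcal{X}_1\oplus\mathcal{X}_2$; the condition $\sigma_d(T\vert_{\mathcal{X}_2})\subseteq\{0\}$ means $T\vert_{\mathcal{X}_2}-\lambda I$ is generalized Drazin invertible on $\mathcal{X}_2$ for all $\lambda\neq0$, and since a direct summand's resolvent behaviour lifts (if $R-\mu I$ is g-Drazin invertible on $\mathcal{X}_2$ then $(0\oplus R)-\mu I=(-\mu I)\oplus(R-\mu I)$ is g-Drazin invertible on $\mathcal{X}$ for $\mu\neq0$), we conclude $T-T^2S\in L(\mathcal{X})^{acc}$. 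Hence $T\in L(\mathcal{X})^{ad}$.

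The main obstacle I anticipate is purely bookkeeping rather than conceptual: one must be careful that all spectra (ordinary and generalized Drazin) computed in the corner algebras $f\mathcal{A}f$, $\overline{f}\mathcal{A}\overline{f}$, and the operator algebras $L(\mathcal{X}_i)$ genuinely coincide, and that membership in $\mathcal{A}^{acc}$ really does translate to "$\sigma_d\subseteq\{0\}$" — this uses Koliha's characterization $0\notin\sigma_d(b)\Leftrightarrow 0\notin\mathrm{acc}\,\sigma(b)$ together with the fact that $\mathcal{A}^{acc}$ is defined by requiring g-Drazin invertibility of $b-\lambda 1$ for \emph{all} nonzero $\lambda$. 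None of this is deep, but it is where an error would hide, so I would state the corner-algebra spectrum identifications as a preliminary remark before running the three implications.
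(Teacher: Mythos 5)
Your argument is correct in substance but organizes the cycle of implications differently from the paper. The paper proves $(1)\Rightarrow(2)$ from part (3) of Theorem~\ref{AG} (an idempotent $P$ with $T+P$ generalized Drazin invertible and $\sigma_{d}(TP)\subseteq\{0\}$, taking $\mathcal{X}_{1}=N(P)$, $\mathcal{X}_{2}=R(P)$), so its first summand is only generalized Drazin invertible; it then needs a separate step $(2)\Rightarrow(3)$ in which $T\vert_{\mathcal{X}_{1}}$ is split further into an invertible piece $T\vert_{\mathcal{X}_{11}}$ and a quasinilpotent piece $T\vert_{\mathcal{X}_{12}}$, the latter being absorbed into $\mathcal{N}$; finally it closes with $(3)\Rightarrow(1)$ via $S=S_{1}\oplus 0\vert_{\mathcal{N}}$ with $S_{1}$ the genuine inverse of $T\vert_{\mathcal{M}}$, for which $T-T^{2}S=0\oplus T\vert_{\mathcal{N}}$ is exact. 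You instead invoke part (4) of Theorem~\ref{AG}, which hands you the invertible corner directly, so you obtain $(1)\Rightarrow(3)$ and $(1)\Rightarrow(2)$ in one stroke and can replace the paper's splitting step by the trivial $(3)\Rightarrow(2)$; the price is that you must prove the slightly harder $(2)\Rightarrow(1)$ rather than $(3)\Rightarrow(1)$. Both routes are valid and of comparable length, and your preliminary remark identifying spectra in the corner algebras $f\mathcal{A}f\cong L(R(f))$ with spectra of the restrictions is exactly the bookkeeping the paper leaves implicit.

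One slip in your $(2)\Rightarrow(1)$ step: since $S_{1}$ is only a generalized Drazin inverse of $T\vert_{\mathcal{X}_{1}}$, the identity $T-T^{2}S=0\oplus T\vert_{\mathcal{X}_{2}}$ is false; the first summand is $T\vert_{\mathcal{X}_{1}}-T\vert_{\mathcal{X}_{1}}^{2}S_{1}$, which is quasinilpotent but in general nonzero. This does not break the proof: writing $T-T^{2}S=Q\oplus T\vert_{\mathcal{X}_{2}}$ with $Q$ quasinilpotent, for $\mu\neq 0$ the operator $(Q\oplus T\vert_{\mathcal{X}_{2}})-\mu I=(Q-\mu I)\oplus(T\vert_{\mathcal{X}_{2}}-\mu I)$ has invertible first summand and generalized Drazin invertible second summand, so $\sigma_{d}(T-T^{2}S)\subseteq\sigma_{d}(Q)\cup\sigma_{d}(T\vert_{\mathcal{X}_{2}})\subseteq\{0\}$ and $T-T^{2}S\in L(\mathcal{X})^{acc}$ as required. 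Correct the displayed identity; the conclusion stands.
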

\begin{proof}
$(1)\Longrightarrow (2)$ By Theorem \ref{AG}, there exists an idempotent element $p\in\{a\}^{'}$ such that $a+p$ is generalized Drazin invertible and $\sigma_{d}(ap)\subseteq\{0\}$. For $\mathcal{X}_{1}=N(P)$ and $\mathcal{X}_{2}=R(P)$, we have $(\mathcal{X}_{1}, \mathcal{X}_{2})\in{\rm Red}(T)$ and $T\vert_{\mathcal{X}_{1}}=(T+P)\vert_{\mathcal{X}_{1}}$ is generalized Drazin invertible. Since $T\vert_{\mathcal{X}_{2}}=(TP)\vert_{\mathcal{X}_{2}}$, $\sigma_{d}(T\vert_{\mathcal{X}_{2}})\subseteq\{0\}$.

$(2)\Longrightarrow (3)$ Sicne $T\vert_{\mathcal{X}_{1}}$ is generalized Drazin invertible, there exists $(\mathcal{X}_{11}, \mathcal{X}_{12})\in{\rm Red}(T\vert_{\mathcal{X}_{1}})$ such that $T\vert_{\mathcal{X}_{11}}$ is invertible and $T\vert_{\mathcal{X}_{12}}$ is quasinilpotent. Let $\mathcal{M}=\mathcal{X}_{11}$ and $\mathcal{N}=\mathcal{X}_{12}\oplus\mathcal{X}_{2}$. Then $(\mathcal{M}, \mathcal{N})\in{\rm Red}(T)$ such that $T=T\vert_{\mathcal{M}}\oplus T\vert_{\mathcal{N}}$, where $T\vert_{\mathcal{M}}$ is invertible and $\sigma_{d}(T\vert_{\mathcal{N}})\subseteq\{0\}$.

$(3)\Longrightarrow (1)$ Let $S=S_{1}\oplus 0\vert_{\mathcal{N}}$, where $S_{1}$ is inverse of $T\vert_{\mathcal{M}}$. Obviously, $S$ is a ag-Drazin inverse of $T$.
\end{proof}
Recall that $F\in L(\mathcal{X})$ is a power finite rank operator if there exists some integer $n$ such that $F^{n}$ is a finite rank operator. The following indicates that $L(\mathcal{X})^{ad}$ is invariant under commuting power finite rank perturbation.
\begin{proposition} \label{AGF}
Let $T\in L(\mathcal{X})^{ad}$, and let $F\in L(\mathcal{X})$ is a power finite rank operator. If $TF=FT$, then $T+F\in L(\mathcal{X})^{ad}$. 
\end{proposition}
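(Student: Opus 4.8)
The plan is to reduce the statement to the generalized Drazin spectrum criterion of Theorem~\ref{GDS}, namely that $S\in L(\mathcal{X})^{ad}$ if and only if $0\notin{\rm acc\,}\sigma_{d}(S)$. So it suffices to prove that $0\notin{\rm acc\,}\sigma_{d}(T)$ implies $0\notin{\rm acc\,}\sigma_{d}(T+F)$. First I would use the hypothesis that $F$ is a power finite rank operator: there is an integer $n$ with $F^{n}$ of finite rank, hence $F^{n}$, and therefore $F$ itself, is a Riesz operator; in particular $\sigma_{b}(F)=\{0\}$, and more to the point $F-\lambda I$ is a Browder operator for every $\lambda\neq 0$. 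Since $TF=FT$, the pair $(T,F)$ is commuting, which is exactly the setting in which perturbation of the generalized Drazin (Browder) spectrum by a commuting Riesz operator behaves well.

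Next I would invoke the decomposition machinery already available. Since $0\notin{\rm acc\,}\sigma_{d}(T)$, Theorem~\ref{AG} (equivalence $(1)\Leftrightarrow(4)$) gives an idempotent $P\in\{T\}^{'}$ with $T=T_{1}\oplus T_{2}$ relative to $(\mathcal{X}_1,\mathcal{X}_2)=(R(P),N(P))$, where $T_{1}$ is invertible on $\mathcal{X}_1$ and $T_2\in(\overline{P}L(\mathcal{X})\overline{P})^{acc}$, i.e.\ $\sigma_{d}(T_2)\subseteq\{0\}$. Because $F$ commutes with $T$, it commutes with the spectral idempotent $P$ (which lies in $\{T\}^{''}$ once we take the one furnished by Theorem~\ref{DCAD}; I would use that version), so $F$ also splits as $F_{1}\oplus F_{2}$ with each $F_i$ a power finite rank, hence Riesz, operator on $\mathcal{X}_i$ commuting with $T_i$. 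On the first summand, $T_{1}$ invertible plus $F_1$ Riesz and commuting gives, by the classical perturbation result for the generalized Drazin spectrum of a sum of commuting elements (or directly: $\sigma_d(T_1+F_1)\subseteq\sigma_d(T_1)+\sigma_d(F_1)=\emptyset+\{0\}=\{0\}$ via Gelfand theory on the commutative closed subalgebra generated by $T_1,T_1^{-1},F_1$, exactly as in the proof of the preceding proposition), that $0\notin{\rm acc\,}\sigma_{d}(T_1+F_1)$; in fact $T_1+F_1\in L(\mathcal{X}_1)^{d}$. On the second summand, $\sigma_{d}(T_2)\subseteq\{0\}$ means $T_2$ is generalized Drazin invertible away from $0$, and $F_2$ is Riesz; a commuting Riesz perturbation of an operator whose generalized Drazin spectrum is contained in $\{0\}$ again has generalized Drazin spectrum contained in $\{0\}$ — one can see this through $\sigma_{b}$, since $\sigma_{b}(T_2)\subseteq{\rm acc\,}\sigma(T_2)\cup(\text{finite rank defects})$ and commuting Riesz perturbations do not change $\sigma_{e}$, so $\sigma_{b}(T_2+F_2)$ stays at worst $\{0\}\cup(\text{isolated points})$, whence ${\rm acc\,}\sigma_{d}(T_2+F_2)\subseteq{\rm acc\,}\sigma_{b}(T_2+F_2)\subseteq\{0\}$... actually the clean way is: $T_2+F_2$ differs from the generalized Drazin invertible-off-$\{0\}$ operator $T_2$ by a commuting Riesz operator, and one checks $T_2+F_2-\lambda I$ is Drazin (indeed invertible modulo a commuting quasinilpotent on a complemented reducing pair) for $\lambda$ in a punctured neighbourhood of $0$, using that $T_2-\lambda I$ is invertible there and $F_2$ Riesz. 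Then $T+F=(T_1+F_1)\oplus(T_2+F_2)$ yields ${\rm acc\,}\sigma_{d}(T+F)\subseteq{\rm acc\,}\sigma_{d}(T_1+F_1)\cup{\rm acc\,}\sigma_{d}(T_2+F_2)$, and $0$ lies in neither, so $0\notin{\rm acc\,}\sigma_{d}(T+F)$ and Theorem~\ref{GDS} finishes the argument.

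The main obstacle I anticipate is the second summand: showing that a commuting power-finite-rank (Riesz) perturbation of an operator with $\sigma_{d}\subseteq\{0\}$ again has $\sigma_{d}\subseteq\{0\}$. The subtlety is that Riesz operators are not quasinilpotent, so one cannot simply quote Koliha's "$\mathcal{A}^{d}$ is stable under commuting quasinilpotent perturbations"; instead one must argue at the level of the Browder/Fredholm spectrum — commuting Riesz perturbations preserve $\sigma_{e}$, hence $\sigma_{b}(T_2+F_2)\subseteq\sigma_{b}(T_2)\cup\{0\}=\{0\}$ up to isolated points, and then translate $0\notin{\rm acc\,}\sigma_{b}$ back to $0\notin{\rm acc\,}\sigma_{d}$ (recalling $\sigma_{d}\subseteq\sigma_{b}$ and, near isolated spectral points, Browder and generalized Drazin invertibility coincide). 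An alternative, perhaps cleaner route that sidesteps Fredholm theory altogether: localize with Lemma~\ref{CON} so that $\sigma(T_2)$ itself is a small disc around $0$, embed $T_2,F_2$ in a commutative closed subalgebra, and use the Gelfand-transform spectral mapping bound $\sigma(T_2+F_2-\lambda I)\subseteq\sigma(T_2-\lambda I)+\sigma(F_2)$ together with the fact that $F_2$ Riesz forces $0\in{\rm iso\,}\sigma(F_2)$ (after a further spectral splitting of $F_2$), to conclude $T_2+F_2-\lambda I$ is invertible modulo something with spectrum $\{0\}$ on a reducing complemented subspace. I would try the Fredholm-theoretic route first since the needed facts ($\sigma_e$ invariance under commuting Riesz perturbation, Browder $\Leftrightarrow$ generalized Drazin at isolated points) are standard and the bookkeeping is lighter.
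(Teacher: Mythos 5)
Your first step coincides with the paper's: by Theorem~\ref{GDS} it suffices to show $0\notin{\rm acc\,}\sigma_{d}(T+F)$. But at that point the paper simply invokes \cite[Lemma 2.1]{YK2016}, which says that $\sigma_{d}(T+F)=\sigma_{d}(T)$ for a commuting \emph{power finite rank} perturbation, whereas you immediately downgrade the hypothesis to ``$F$ is Riesz'' and try to build the stability from that. This cannot be repaired, because the statement is false for commuting Riesz (even compact) perturbations. Take $\mathcal{X}=\bigoplus_{n}\mathcal{H}_{n}$ with each $\mathcal{H}_{n}$ infinite dimensional, $T=\bigoplus_{n}\frac{1}{n}I_{n}$, and $F=\bigoplus_{n}F_{n}$ where $F_{n}$ is compact diagonal with $\sigma(F_{n})=\{0\}\cup\{\frac{1}{nm}:m\in\mathbb{N}\}$ and $\lVert F_{n}\rVert=\frac{1}{n}$. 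Then $F$ is compact and commutes with $T$, and $\sigma_{d}(T)=\{0\}$, so $T\in L(\mathcal{X})^{ad}$; but $\sigma\big((T+F)\vert_{\mathcal{H}_{n}}\big)$ accumulates at $\frac{1}{n}$, so $\frac{1}{n}\in\sigma_{d}(T+F)$ for every $n$ and hence $0\in{\rm acc\,}\sigma_{d}(T+F)$. What the power finite rank hypothesis really buys (and what you discard) is that $\sigma(F)$ is finite, since $\sigma(F)^{n}=\sigma(F^{n})$ is finite; a Riesz operator can have infinite spectrum, and that is exactly what destroys the conclusion.

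Beyond this structural problem, several individual steps are wrong. The claim $\sigma_{d}(T_{1})=\emptyset$ for invertible $T_{1}$ is false (invertibility only gives $0\notin\sigma(T_{1})$, while ${\rm acc\,}\sigma(T_{1})$ can be large), and the subadditivity $\sigma_{d}(a+b)\subseteq\sigma_{d}(a)+\sigma_{d}(b)$ is asserted without proof. On the second summand, $\sigma_{d}(T_{2})\subseteq\{0\}$ does not make $T_{2}-\lambda I$ invertible in a punctured neighbourhood of $0$: $\sigma(T_{2})$ may contain isolated points $\lambda_{n}\to 0$. And the Browder route delivers at best ${\rm acc\,}\sigma_{d}(T_{2}+F_{2})\subseteq\{0\}$, which is precisely \emph{not} the needed statement $0\notin{\rm acc\,}\sigma_{d}(T_{2}+F_{2})$; worse, $\sigma_{b}(T_{2})$ can itself accumulate at $0$ even when $\sigma_{d}(T_{2})\subseteq\{0\}$ (isolated eigenvalues of infinite multiplicity lie in $\sigma_{e}\subseteq\sigma_{b}$ but not in $\sigma_{d}$), so the inclusion $\sigma_{d}(T_{2}+F_{2})\subseteq\sigma_{b}(T_{2}+F_{2})=\sigma_{b}(T_{2})$ gives no control near $0$. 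The honest options are to cite the perturbation lemma as the paper does, or to give a genuine proof of it that uses the finite rank of $F^{n}$ itself; note that even the Gelfand bound $\sigma(T+F)\subseteq\sigma(T)+\sigma(F)$ with $\sigma(F)$ finite is not quite enough on its own, since a nonzero $\mu\in\sigma(F)$ could translate a point of ${\rm acc\,}\sigma_{d}(T)$ onto $0$.
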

\begin{proof}
In view of Theorem \ref{GDS}, one has $0\notin{\rm acc}\sigma_{d}(T)$.
It follows from \cite[Lemma 2.1]{YK2016} that $\sigma_{d}(T)=\sigma_{d}(T+F)$, and hence $0\notin{\rm acc}\sigma_{d}(T+F)$. Thus, $T+F$ is ag-Drazin invertible.  
\end{proof}
From Proposition \ref{AGF}, one can easily get the following result.
\begin{corollary} 
Let $T\in L(\mathcal{X})^{ad}$, and let $F\in L(\mathcal{X})$ is a finite rank operator. If $TF=FT$, then $T+F\in L(\mathcal{X})^{ad}$. 
\end{corollary}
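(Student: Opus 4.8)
The plan is to reduce everything to the spectral characterization already in hand. By Theorem~\ref{GDS}, the hypothesis $T\in L(\mathcal{X})^{ad}$ is equivalent to $0\notin{\rm acc\,}\sigma_{d}(T)$, so the entire proposition becomes a statement about the generalized Drazin spectrum: I want to show $0\notin{\rm acc\,}\sigma_{d}(T+F)$ and then invoke Theorem~\ref{GDS} in the other direction to conclude $T+F\in L(\mathcal{X})^{ad}$. This is exactly the pattern used in Proposition~\ref{AGF}, so the corollary--style structure is clear; the only new ingredient needed is a perturbation result for $\sigma_{d}$ under a commuting power finite rank operator.

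First I would record that a power finite rank operator is, in particular, a Riesz operator (indeed a polynomially Riesz / power compact operator), so $\sigma_{e}(F)=\{0\}$ and more importantly $F$ lies in the class to which the cited stability lemma applies. The key step is then to quote \cite[Lemma~2.1]{YK2016} (or the appropriate commuting-Riesz-perturbation statement for the generalized Drazin spectrum) to get $\sigma_{d}(T)=\sigma_{d}(T+F)$ whenever $TF=FT$ and $F$ is power finite rank. Granting that equality, the accumulation points of the two sets coincide, so ${\rm acc\,}\sigma_{d}(T+F)={\rm acc\,}\sigma_{d}(T)$, and since $0\notin{\rm acc\,}\sigma_{d}(T)$ by Theorem~\ref{GDS} we immediately obtain $0\notin{\rm acc\,}\sigma_{d}(T+F)$. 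A second application of Theorem~\ref{GDS} finishes the argument.

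The main obstacle is purely bibliographic rather than mathematical: one must make sure the invariance $\sigma_{d}(T)=\sigma_{d}(T+F)$ really is available for \emph{power} finite rank $F$ and not merely for finite rank $F$. If \cite[Lemma~2.1]{YK2016} is stated for finite rank (or compact) perturbations only, I would either cite a stronger source on stability of the Drazin/generalized-Drazin spectrum under commuting Riesz perturbations, or bootstrap: write $F^{n}$ of finite rank, use that $T+F$ and $T$ have the same generalized Drazin spectrum because the difference is a commuting Riesz element and the generalized Drazin spectrum is stable under commuting Riesz perturbations (a known fact, e.g.\ via the fact that Riesz elements are precisely those with $\sigma_{d}(\cdot)\subseteq\{0\}$ together with the spectral mapping / localization properties of $\sigma_{d}$). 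In any case the whole proof is two lines once the spectral stability statement is pinned down; there is no genuine computation to carry out, only the correct citation.

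\begin{proof}
In view of Theorem~\ref{GDS}, $T\in L(\mathcal{X})^{ad}$ is equivalent to $0\notin{\rm acc\,}\sigma_{d}(T)$. Since $F$ is a power finite rank operator commuting with $T$, by \cite[Lemma~2.1]{YK2016} one has $\sigma_{d}(T)=\sigma_{d}(T+F)$, and hence $0\notin{\rm acc\,}\sigma_{d}(T+F)$. Applying Theorem~\ref{GDS} again, $T+F$ is ag-Drazin invertible.
\end{proof}
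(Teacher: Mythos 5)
Your proof is correct and is essentially the paper's argument: the paper simply notes that a finite rank operator is a power finite rank operator (the case $n=1$) and cites Proposition~\ref{AGF}, whose proof is exactly the spectral-stability argument you have inlined ($\sigma_{d}(T)=\sigma_{d}(T+F)$ via \cite[Lemma~2.1]{YK2016} combined with Theorem~\ref{GDS} in both directions). The most economical write-up would be to observe the containment of finite rank in power finite rank and invoke the proposition directly, but your version loses nothing.
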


\bmhead{Acknowledgments}

The authors would like to thank the anonymous referees for helpful comments and suggestions. This research is supported by the National Natural Science Foundation of China (Grant number 11871303).


\bibliography{sn-bibliography}


%

\end{document}